\title[Local maxima of maximal injectivity radius]{The local maxima of maximal injectivity radius among hyperbolic surfaces}
\author{Jason DeBlois}
\address{Department of Mathematics\\University of Pittsburgh\\\newline
  Pittsburgh, PA 15260}
\email{jdeblois@pitt.edu}
\urladdr{http://www.pitt.edu/~jdeblois}
\newcommand\co{\colon\thinspace}
\theoremstyle{definition}
\newtheorem{para}{}[section]
\newtheorem{definition}[para]{Definition}
\newtheorem{claim}[equation]{Claim}
\theoremstyle{plain}
\newtheorem{theorem}[para]{Theorem}
\newtheorem{lemma}[para]{Lemma}
\newtheorem{proposition}[para]{Proposition}
\newtheorem{corollary}[para]{Corollary}
\numberwithin{equation}{para}
\numberwithin{figure}{section}
\newcommand\calAC{\mathcal{AC}}
\newcommand\calBC{\mathcal{BC}}
\newcommand\calHC{\mathcal{HC}}
\newcommand\calc{{\mathcal C}}
\newcommand\cale{\mathcal{E}}
\newcommand\cals{\mathcal{S}}
\newcommand\calt{{\mathcal T}}
\newcommand\bd{\mathbf{d}}
\newcommand\bx{\mathbf{x}}
\newcommand\injrad{\mathit{injrad}}
\newcommand\maxi{\mathit{max}}
\newcommand\dist{d}
\begin{document}

\begin{abstract}
The function on the Teichm\"uller space of complete, orientable, finite-area hyperbolic surfaces of a fixed topological type that assigns to a hyperbolic surface its maximal injectivity radius has no local maxima that are not global maxima.\end{abstract}

\maketitle

Let $\mathfrak{T}_{g,n}$ be the Teichm\"uller space of complete, orientable, finite-area hyperbolic surfaces of genus $g$ with $n$ cusps.  In this paper we begin to analyze the function $\maxi\co\mathfrak{T}_{g,n}\to\mathbb{R}^+$ that assigns to $S\in\mathfrak{T}_{g,n}$ its maximal injectivity radius.  The \textit{injectivity radius} of $S$ at $x$, $\injrad_x(S)$, is half the length of the shortest non-constant geodesic arc in $S$ with both endpoints at $x$.  It is not hard to see that $\injrad_x(S)$ varies continuously with $x$ and approaches $0$ in the cusps of $S$, so it attains a maximum on any fixed finite-area hyperbolic surface $S$.

Our main theorem characterizes local maxima of $\maxi$ on $\mathfrak{T}_{g,n}$:

\begin{theorem}\label{only max}  For $S\in\mathfrak{T}_{g,n}$, the function $\maxi$ attains a local maximum at $S$ if and only if for each $x\in S$ such that $\injrad_x(S) = \maxi(S)$, each edge of the Delaunay tessellation of $(S,x)$ has length $2\injrad_x(S)$ and each face is a triangle or monogon.\end{theorem}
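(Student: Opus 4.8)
The plan is to reduce Theorem~\ref{only max} to a statement about a single point of maximal injectivity radius, to obtain one implication from the sharp upper bound on $\maxi$, and to obtain the other from a first--order deformation argument in which the Delaunay tessellation converts an analytic rigidity condition into the stated combinatorial one. Fix $S$, write $r=\maxi(S)$, and let $C(S)$ be the (compact) set of $x\in S$ with $\injrad_x(S)=r$. Since $\injrad_\cdot$ is jointly continuous in the point and the surface and decays to $0$ in the cusps, $\maxi$ is continuous on $\mathfrak T_{g,n}$ and $C(\cdot)$ varies upper--semicontinuously: a limit of points of $C(S_k)$ along $S_k\to S$ lies in $C(S)$. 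Hence $S$ fails to be a local maximum of $\maxi$ exactly when some $x\in C(S)$ is \emph{improvable}, meaning there are $S_k\to S$ and $p_k\to x$ with $\injrad_{p_k}(S_k)>r$. It therefore suffices to prove, for each $x\in C(S)$, that $x$ is improvable if and only if the Delaunay tessellation of $(S,x)$ has an edge of length greater than $2r$ or a face that is not a triangle or a monogon.

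\emph{The combinatorial condition yields a (global, hence local) maximum.} Suppose every edge of the Delaunay tessellation of $(S,x)$ has length $2r$ and every face is a triangle or a monogon. Then each triangular face is equilateral with side $2r$, hence has an interior angle depending only on $r$; each monogon surrounds a single cusp with a shape determined by $r$; and the one--vertex Euler relation forces exactly $n$ monogons and $4g+3n-2$ triangles. Summing the areas of the faces and using that the area of $S$ is $2\pi(2g+n-2)$ pins $r$ to the single value $M_{g,n}$ solving the resulting area equation. Since $\maxi\le M_{g,n}$ throughout $\mathfrak T_{g,n}$ by the sharp (B\"or\"oczky--type) area estimate, $S$ attains the maximum of $\maxi$, so it is a fortiori a local maximum. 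Running this over all $x\in C(S)$ gives one direction of the theorem, and together with the other it gives the assertion of the abstract: a local maximum of $\maxi$ is a global maximum.

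\emph{Improvability from the first variation.} For the converse, let $\ell_1,\dots,\ell_k$ be the length--$2r$ geodesic loops at $x$; these are precisely the shortest edges of the Delaunay tessellation of $(S,x)$, and the shortest loop at a nearby point of a nearby surface is a continuation of some $\ell_i$, of length $L_i$. Encode a perturbation of $(S,x)$ by a vector $v\in T_S\mathfrak T_{g,n}\oplus T_xS$ --- a Teichm\"uller direction together with a displacement of the basepoint; then the best first--order gain in injectivity radius near $x$ is $\tfrac12\min_i dL_i(v)$, where the basepoint component of $dL_i$ is $-(\mathbf u_i+\mathbf u_i')$ for $\mathbf u_i,\mathbf u_i'$ the directions in which $\ell_i$ leaves and returns to $x$, and the Teichm\"uller component is the classical geodesic--length derivative. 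By Gordan's alternative, if $0$ lies outside the convex hull of $\{dL_1,\dots,dL_k\}$ then there is a direction $v$ with $dL_i(v)>0$ for all $i$, and following it exhibits $x$ as improvable. Dually, the obstruction is the Dirichlet domain $D_x$ at a lift $\tilde x$: its side for $\gamma$ lies at distance $\tfrac12 L_\gamma$ from $\tilde x$, its vertices are the circumcenters of the Delaunay faces (ideal precisely at the monogons), and $x$ is not improvable only if $D_x$ cannot have all of its distance--$r$ sides pushed outward.

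So the remaining task --- and the step I expect to be the genuine difficulty --- is to show that when the Delaunay tessellation of $(S,x)$ has an edge longer than $2r$ or a face that is not a triangle or a monogon, then $0$ does lie outside the convex hull of the $dL_i$, equivalently that some deformation strictly lengthens every $\ell_i$. Each forbidden feature supplies the room in principle: an over--long edge is a slack side of $D_x$; a face with at least four sides has co--circular vertices, a non--generic condition whose destruction --- already by moving $\tilde x$, which subdivides that face into triangles --- opens a gap; and a cusped face that is not a monogon is handled similarly. The hard part is making the deformation work globally, since it is built locally near the offending face: one must verify that it does not shorten any of the other --- possibly numerous --- shortest loops of $(S,x)$, and that the first--order improvement persists to an honest one past the combinatorial degeneracies that arise along the way (a new loop tying for shortest, a co--circular face admitting more than one triangulation, vertices of $D_x$ colliding or splitting). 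Controlling these global interactions, rather than the convex--hull bookkeeping itself, is where the real work lies.
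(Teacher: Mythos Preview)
Your proposal is not a proof: you explicitly leave the central step---showing that an over-long edge or a non-triangular/non-monogon face forces $0$ out of the convex hull of the $dL_i$---as ``the remaining task'' and ``the genuine difficulty'', and you do not carry it out. The reduction to improvability of a single $x\in C(S)$ is fine, and your ``if'' direction via the face count and the sharp area bound is essentially the paper's (which cites \cite[Theorem~5.11]{DeB_Voronoi} for that implication). What is missing is exactly the content of Section~\ref{the proof, man}.

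For the hard direction the paper does something rather different from your linear-programming framework. It works not in $T_S\mathfrak T_{g,n}\oplus T_xS$ but in the edge-length space $\mathfrak D(S,\calt)$ of a triangulation $\calt$ compatible with the Delaunay tessellation (Lemma~\ref{compatible}), and writes down an \emph{explicit} one-parameter family $\bd(t)$. When some Delaunay edge is longer than $2r$ (Proposition~\ref{longer than}) it pushes every shortest edge up at unit speed and lets a single long edge absorb the angle-sum constraint $A_x\equiv 2\pi$; this does yield a first-order gain, so Gordan would also succeed here. But when every Delaunay edge already has length $2r$ and the obstruction is a complicated cell $C$, the paper instead \emph{shrinks a diagonal} $\gamma_0$ of $C$ and lets the common Delaunay-edge length $d(t)$ be determined by the ODE $\frac{d}{dt}A_x(\bd(t))=0$. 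The sign of $d'(t)$ is then read off from the ``calculus of cyclic polygons'' of \cite{DeB_cyclic_geom}---specifically the formulas for $\partial D_0/\partial d_j$ on the strata $\calc_3$, $\calBC_3$, $\calHC_3$ of triangle space and the monotonicity of the circumradius $J$.

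There is a concrete obstruction to closing the gap with your first-variation scheme as stated. In \emph{every} sub-case of Claim~\ref{increasing}---$C$ a quadrilateral, $C$ compact with more sides, $C$ horocyclic---the paper computes $d'(0)=0$: the improving family raises $\min_i L_i$ only to \emph{second} order, and the argument proceeds by showing $d'(t)>0$ for small $t>0$ via the inequality $J(T_0(\bd(t)))>J(T_1(\bd(t)))$. That is perfectly consistent with $0$ lying on the boundary of, or inside, $\mathrm{conv}\{dL_i\}$, in which case Gordan's alternative produces no strictly improving direction at all. Highly symmetric examples make this concern sharp: for the genus-two surface obtained from a regular hyperbolic octagon with opposite sides identified and $x$ at the center, the Delaunay tessellation of $(S,x)$ is a single equilateral octagon, the rotational symmetry permutes the $\ell_i$ cyclically and fixes $x$, so $\sum_i dL_i$ is an invariant covector whose basepoint component $-\sum_i(\mathbf u_i+\mathbf u_i')$ vanishes; if the symmetry also kills the Teichm\"uller component (as one expects at an isolated fixed point of the action on $\mathfrak T_{g,0}$) then $\sum_i dL_i=0$ and no first-order improvement exists. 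So to make your approach work you would need, at minimum, a second-order refinement of the separation argument in the equilateral regime---and that second-order analysis is precisely where the paper spends its effort, using the explicit derivative formulas of \cite{DeB_cyclic_geom} rather than abstract convexity.
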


Here for a hyperbolic surface $S$ with locally isometric universal cover $\pi\co\mathbb{H}^2\to S$, and $x\in S$, the \textit{Delaunay tessellation of $(S,x)$} is the projection to $S$ of the Delaunay tessellation of $\pi^{-1}(x)\subset\mathbb{H}^2$, as defined by an empty circumcircles condition (see Section \ref{Delaunay} below).  In particular, a \textit{monogon} is the projection to $S$ of the convex hull of a $P$-orbit in $\pi^{-1}(x)$, for a maximal parabolic subgroup $P$ of $\pi_1 S$ acting on $\mathbb{H}^2$ by covering transformations.

Theorem 5.11 of the author's previous paper \cite{DeB_Voronoi} characterized the global maxima of $\maxi$ by a condition equivalent to that of Theorem \ref{only max}, extending work of Bavard \cite{Bavard}.  We thus have:

\begin{corollary}\label{local to global}  All local maxima of $\maxi$ on $\mathfrak{T}_{g,n}$ are global maxima.\end{corollary}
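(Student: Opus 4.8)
The plan is to deduce the corollary directly from Theorem \ref{only max} together with Theorem 5.11 of \cite{DeB_Voronoi}. One direction is trivial: a global maximum of $\maxi$ is in particular a local maximum, so the content is the converse. Suppose then that $\maxi$ attains a local maximum at some $S\in\mathfrak{T}_{g,n}$. Theorem \ref{only max} applies and tells us that for every $x\in S$ with $\injrad_x(S)=\maxi(S)$, each edge of the Delaunay tessellation of $(S,x)$ has length $2\injrad_x(S)$ and each face is a triangle or a monogon.

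It then remains only to observe that this is exactly the condition under which $S$ is a global maximum. The excerpt records that \cite[Theorem 5.11]{DeB_Voronoi} characterizes the global maxima of $\maxi$ by a condition equivalent to the hypothesis of Theorem \ref{only max}; invoking that equivalence, $S$ is a global maximum, which is the assertion of Corollary \ref{local to global}. (Combining the two characterizations in fact shows that the local and global maxima of $\maxi$ on $\mathfrak{T}_{g,n}$ coincide exactly.) In writing this up I would first restate the \cite{DeB_Voronoi} condition explicitly and then spell out, via the duality between the Delaunay tessellation of $\pi^{-1}(x)\subset\mathbb{H}^2$ and the corresponding Voronoi-type decomposition centered there, why it matches the edge-length and face conditions above — in particular why the monogon faces here correspond to the maximal parabolic subgroups of $\pi_1 S$, hence to the cusps, in the earlier paper's formulation.

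The genuinely substantive work lies in Theorem \ref{only max}, not in this corollary: once that theorem is in hand, the only thing left is the bookkeeping identification of the two conditions, and the only point requiring real care is the cusped case, where the monogons must be matched against the ideal-polygon pieces of \cite{DeB_Voronoi}'s extremal configurations. Beyond that I anticipate no obstacle.
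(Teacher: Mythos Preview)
Your proposal is correct and follows exactly the paper's approach: the corollary is stated immediately after Theorem \ref{only max} with no separate proof, the paper simply noting that \cite[Theorem 5.11]{DeB_Voronoi} characterizes the global maxima of $\maxi$ by a condition equivalent to that of Theorem \ref{only max}. Your additional plan to spell out the equivalence of the two conditions (including the monogon/cusp correspondence) goes slightly beyond what the paper records here, but is consistent with it.
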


This contrasts the behavior of $\mathit{syst}$, the function on $\mathfrak{T}_{g,n}$ that records the \textit{systole}, ie.~shortest geodesic, length of hyperbolic surfaces: P.~Schmutz Schaller proved in \cite{Schmutz} that for many $g$ and $n$, $\mathit{syst}$ has local maxima on $\mathfrak{T}_{g,n}$ that are not global maxima.  Comparing with $\mathit{syst}$, which is well-studied, is one motivation for studying $\maxi$.  (Note that for a closed hyperbolic surface $S$, $\mathit{syst}(S)$ is twice the \textit{minimal} injectivity radius of $S$.)

The referee has sketched a direct argument to show that $\maxi$ attains a global maximum on $\mathfrak{T}_{g,n}$.  (This is also sketched in the preprint \cite{Gendulphe}, and I prove a somewhat more general fact as Proposition 4.3 of \cite{DeB_many}.)  Together with this observation, Theorem \ref{only max} gives an alternative proof of Theorem 5.11 of \cite{DeB_Voronoi}, which is not completely independent of the results of \cite{DeB_Voronoi} but uses only some early results from Sections 1 and Section 2.1 there.

We prove Theorem \ref{only max} by describing explicit, injectivity radius-increasing deformations of pointed surfaces $(S,x)$ that do not satisfy its criterion.  The deformations are produced by changing finite edge lengths of a decomposition $\calt$ of $S$ into compact and horocyclic ideal triangles, with vertex set $x$.  In Section \ref{deform} we introduce a space $\mathfrak{D}(S,\calt)$ parametrizing such deformations.  Proposition \ref{parametrize} there shows that the natural map $\mathfrak{D}(S,\calt)\to \mathfrak{T}_{g,n}$ is continuous.  Proposition \ref{Delaunay control} gives a simple description of $\maxi$ near $\bd\in\mathfrak{D}(S,\calt)$ in terms of the edge lengths, assuming that all shortest arcs based at $x$ are edges of $\calt$.


By Lemma \ref{injrad edge}, all such arcs are Delaunay edges.  Section \ref{Delaunay} introduces the Delaunay tessellation of $(S,x)$, following the author's prior paper \cite{DeB_Delaunay}, and describes its relevance to this paper.  In particular, we prove Theorem \ref{only max} using triangulations obtained by subdividing the Delaunay tessellation, see Lemma \ref{compatible}.

Section \ref{the proof, man} is devoted to the proof of Theorem \ref{only max}.  Proposition \ref{longer than} reduces it to the case that every Delaunay edge has length $2\mathit{injrad}_x(S)$.  We believe it has more to say about the critical set of $\maxi$ (properly interpreted, since $\maxi$ is not smooth) and hope in future work to more deeply understand this set.  Theorem \ref{only max} is then proved by showing that complicated Delaunay cells can be broken apart by injectivity radius-increasing deformations.  The arguments in this section use some basic observations from \cite{DeB_Voronoi} and, like the results there, exploit what you might call the ``calculus of cyclic polygons'' laid out in \cite{DeB_cyclic_geom}. 

\subsection*{Acknowledgements} We were originally motivated and in part inspired by a private communication from Ian Agol, where he sketched a proof of \cite[Theorem 5.11]{DeB_Voronoi} using deformations through hyperbolic cone surfaces and conjectured Corollary \ref{local to global}.  We thank Ian for his interest and ideas.  We are also grateful to the referee for helpful comments which have significantly improved the paper, in particular for the simplified proof of Proposition \ref{parametrize}.

After this paper was first submitted, M.~Gendulphe posted the preprint \cite{Gendulphe} which proves Theorem \ref{only max} by a different method.

\section{Deformations via triangulations}\label{deform}

Let us begin this section by fixing a complete, oriented, finite-area hyperbolic surface $S$ of genus $g$ with $n$ cusps and a decomposition $\mathcal{T}$ of $S$ into compact and horocyclic ideal hyperbolic triangles.  (Here a \textit{horocyclic ideal triangle} has two vertices on a horocycle of $\mathbb{H}^2$ and an ideal vertex at the horocycle's ideal point.)  We will call the pair $(S,\calt)$ a \textit{triangulated surface} for short.  The main results of this section are Propositions \ref{parametrize} and \ref{Delaunay control}.

We first define a space $\mathfrak{D}(S,\calt)$ of possible deformations of the edge lengths of $\calt$.

\begin{definition}\label{triangle def}  Suppose $(S,\calt)$ is a complete, oriented, triangulated hyperbolic surface of finite area, and enumerate the faces of $\calt$ as $F_1,\hdots,F_k$ and the compact edges as $e_1,\hdots,e_l$.  Let $U\subset(0,\infty)^l$ be the set of $\bd = (d_0,\hdots,d_l)$ such that $d_{j_1} < d_{j_2}+d_{j_3}$ for any $j_1,j_2,j_3$ such that $e_{j_1},e_{j_2}$ and $e_{j_3}$ are the distinct edges of some $F_i$.  For each vertex $x$ of $\calt$ define:
$$ A_x(\bd) = \sum_i \cos^{-1}\left(\frac{\cosh d_{j_1}\cosh d_{j_2} - \cosh d_{j_3}}{\sinh d_{j_1}\sinh d_{j_2}}\right) + \sum_k \sin^{-1}\left(\frac{1}{\cosh(d_{j_k}/2)}\right) $$
This sum is taken over all $i$ such that $x$ is in the compact triangle $F_i$, where $e_{j_1}$ and $e_{j_2}$ are the edges of $\calt$ containing $x$, and $e_{j_3}$ is the edge of $F_i$ opposite $x$, and all $k$ such that $x$ is in the horocyclic triangle $F_k$ with finite side length $d_{j_k}$.  We then take:
$$ \mathfrak{D}(S,\calt) = \{\bd\in U\,|\, A_x(\bd) = 2\pi\ \mbox{for each vertex $x$ of $\calt$}\} $$
We call this the set of \textit{deformations of $(S,\calt)$}.\end{definition}

\begin{lemma}\label{hyperbolize}  Suppose $(S,\calt)$ is a complete, oriented, triangulated hyperbolic surface of finite area.  For each face $F_i$ of $\calt$ and each $\bd=(d_1,\hdots,d_l)\in\mathfrak{D}(S,\calt)$ let $F_i(\bd)$ be the compact hyperbolic triangle with edge lengths $d_{j_1}$, $d_{j_2}$ and $d_{j_3}$, if $F_i$ is compact with edges $e_{j_1}$, $e_{j_2}$ and $e_{j_3}$; or otherwise the horocyclic ideal triangle with finite edge length $d_{j_1}$, where $F_i$ has compact edge $e_{j_1}$.  The triangulated polyhedral complex $(S(\bd),\calt(\bd))$ obtained by identifying edges of the $F_i(\bd)$ in pairs corresponding to edges of $\calt$ inherits a complete hyperbolic structure from the $F_i(\bd)$, and it has a homeomorphism to $(S,\calt)$ taking $F_i(\bd)$ to $F_i$ for each $i$.\end{lemma}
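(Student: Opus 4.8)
The plan is to construct $(S(\bd),\calt(\bd))$ by performing the prescribed gluing and then to verify, in order: (a) that each $F_i(\bd)$ is well defined up to isometry; (b) that the glued object carries a hyperbolic metric, nonsingular even at the vertices of $\calt(\bd)$ that come from compact vertices of $\calt$; and (c) that this metric is complete, the ends at the ideal vertices being genuine cusps. The homeomorphism with $(S,\calt)$ is then routine. For (a): if $F_i$ is compact with edges $e_{j_1},e_{j_2},e_{j_3}$, the condition $\bd\in U$ says exactly that $d_{j_1},d_{j_2},d_{j_3}$ satisfy the strict triangle inequalities, so there is a compact hyperbolic triangle with these side lengths, unique up to isometry, whose angle at the vertex shared by $e_{j_1},e_{j_2}$ is $\cos^{-1}\big((\cosh d_{j_1}\cosh d_{j_2}-\cosh d_{j_3})/(\sinh d_{j_1}\sinh d_{j_2})\big)$ by the hyperbolic law of cosines; if $F_i$ is horocyclic with compact edge $e_{j_1}$, then normalizing its ideal vertex to $\infty$ in the upper half-plane and its two finite vertices to lie on the horocycle $\{y=1\}$ determines it uniquely, with the finite vertices at horizontal separation $2\sinh(d_{j_1}/2)$ and angle $\sin^{-1}(1/\cosh(d_{j_1}/2))$ at each of them --- a short direct computation.

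Now glue the $F_i(\bd)$ along the edges prescribed by $\calt$. Two copies of an edge being identified have equal length ($d_j$ from either side, or both semi-infinite), so the identification is realized by the unique isometry compatible with the identification of endpoints in $\calt$, and the hyperbolic metrics on the $F_i(\bd)$ patch together across the open edges; $S(\bd)$ is oriented because $S$ is. At a vertex $x$ of $\calt(\bd)$ lying over a compact vertex of $\calt$, the cone angle is the sum of the angles at $x$ of the incident triangles, which by the two formulas above is exactly $A_x(\bd)$, and $A_x(\bd)=2\pi$ since $\bd\in\mathfrak{D}(S,\calt)$; a hyperbolic cone point of angle $2\pi$ is a nonsingular point, so the metric extends smoothly over $x$. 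Hence $S(\bd)$ minus its ideal vertices is a hyperbolic surface.

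The crux, and where I expect the real work, is to see that each end of $S(\bd)$ at an ideal vertex is an honest cusp. Fix such an ideal vertex $v$ and let $F_{i_1},\dots,F_{i_m}$ be the horocyclic ideal triangles incident to it, cyclically ordered so that consecutive ones share a geodesic ray to $v$. Developing into $\mathbb{H}^2$ with $v$ at $\infty$: place $\tilde F_{i_1}$ with its finite vertices at $(0,1)$ and $(a_1,1)$, where $a_1=2\sinh(d_{i_1}/2)>0$, and proceed around $v$. Since the two finite vertices of each horocyclic triangle lie on a common horocycle about its ideal vertex --- here always $\infty$ --- every developed finite vertex again lands on $\{y=1\}$, the $(j{+}1)$-st one at $(a_1+\cdots+a_j,1)$ with $a_j=2\sinh(d_{i_j}/2)>0$; consequently the holonomy around $v$ is $z\mapsto z+A$ with $A=\sum_j a_j>0$, a nontrivial parabolic with no shear. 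Setting $h_0=\max_j\sqrt{a_j^2/4+1}$, above height $h_0$ each developed triangle is the full vertical strip between its two bounding lines, so these truncated triangles assemble to $\{0\le x\le A,\ y\ge h_0\}$, whose orbit under $\langle z\mapsto z+A\rangle$ tiles $\{y\ge h_0\}$; hence the part of $S(\bd)$ lying above $h_0$ in the developing picture is isometric to the standard cusp $\{y\ge h_0\}/\langle z\mapsto z+A\rangle$. Deleting the open such neighborhoods of the ideal vertices leaves a compact surface with boundary --- a finite union of the compact $F_i(\bd)$ together with the truncated horocyclic ones --- so $S(\bd)$ is a compact hyperbolic surface-with-boundary with finitely many standard cusps attached, hence complete (and of finite area).

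Finally, $(S(\bd),\calt(\bd))$ and $(S,\calt)$ carry the same combinatorial gluing data by construction, so homeomorphisms $F_i(\bd)\to F_i$ that are proportional to arclength on the (finite and infinite) edges can be extended over the interiors and fitted together into a homeomorphism $(S(\bd),\calt(\bd))\to(S,\calt)$ carrying each $F_i(\bd)$ to $F_i$. The one genuinely delicate point is the completeness at the cusps, and it is exactly there that the hypothesis that the noncompact faces are \emph{horocyclic} ideal triangles --- two finite vertices on a horocycle centered at the ideal vertex --- is used: it forces the developed finite vertices onto a common horocycle, hence the cusp holonomy to be a pure parabolic translation; dropping it, the glued structure could fail to be complete even with parabolic holonomy.
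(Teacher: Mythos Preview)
Your proof is correct and follows the same approach as the paper's: both derive the two angle formulas, use the condition $A_x(\bd)=2\pi$ to rule out cone singularities at the finite vertices, and invoke the horocyclic hypothesis on the noncompact faces to obtain completeness at the cusps. The only difference is that the paper outsources the last two steps to Ratcliffe's side-pairing theorem and gluing-invariant criterion (Theorems 9.2.2 and 9.8.5 there), whereas you carry out the developing-map argument by hand in the upper half-plane; your version is more self-contained and makes the mechanism---that horocyclic faces force the developed finite vertices onto a single horocycle, hence pure parabolic holonomy---fully explicit.
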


\begin{proof}  Let us recall some standard facts.  Below, for a compact hyperbolic triangle with sides of length $a$, $b$ and $c$ let $\alpha$ be the interior angle opposite the side of length $a$.  Let $\delta$ be the interior angle at either endpoint of the finite edge, with length $d$, of a horocyclic ideal triangle.  Then:
\begin{align}\label{angles}
  & \alpha = \cos^{-1}\left(\frac{\cosh b\cosh c-\cosh a}{\sinh b\sinh c}\right)\in (0,\pi) &
  & \delta = \sin^{-1}\left(\frac{1}{\cosh(d/2)}\right) \in (0,\pi/2) \end{align}
The left-hand equation is the hyperbolic law of cosines (see eg.~\cite[Theorem 3.5.3]{Ratcliffe}).  The right can be proved by an explicit calculation in, say, the upper half-plane model $\mathbb{R}\times(0,\infty)$ for $\mathbb{H}^2$, placing the horocycle at $\mathbb{R}\times\{1\}$ and using the fact that the Euclidean and hyperbolic distances $\ell$ and $d$, respectively, between points on it satisfy $\ell/2=\sinh(d/2)$.

Now with the faces and compact edges of $\calt$ enumerated as in Definition \ref{triangle def}, for $\bd\in\mathfrak{D}(S,\calt)$ and $1\leq i\leq k$ let $F_i(\bd)$ be as described in the statement.  By construction and the formulas of (\ref{angles}), $A_x(\bd) = 2\pi$ is the sum of all vertex angles of the $F_i(\bd)$.  Note also that each edge of $\calt$ is contained in two faces, and again by construction if this edge is compact then the corresponding edges of the $F_i(\bd)$ have the same length.

For $(S(\bd),\calt(\bd))$ as described in the statement, there is clearly a triangulation-preserving homeomorphism $(S(\bd),\calt(\bd))\to(S,\calt)$.  Moreover, choosing a disjoint collection of representatives of the $F_i(\bd)$ in $\mathbb{H}^2$ it is not hard to arrange for the pairing of edges to be realized by an $\mathrm{Isom}^+(\mathbb{H}^2)$-\textit{side pairing} in the sense of \cite[\S 9.2]{Ratcliffe}.  Theorem 9.2.2 of \cite{Ratcliffe} then implies that $S(\bd)$ inherits a hyperbolic structure from the $F_i(\bd)$.  The key requirement for this result, that the side-pairing is \textit{proper}, obtains from the fact that the angle sum $A_x(\bd)$ around the vertex $x$ is $2\pi$.

We further claim that the hyperbolic structure on $S(\bd)$ is complete, see \cite[Theorem 8.5.9]{Ratcliffe}.  This follows from the stipulation in Definition \ref{triangle def} that non-compact faces of $\calt$ are horocyclic ideal triangles, since an isometry that takes an infinite edge of one horocyclic ideal triangle to an infinite edge of another identifies the horocycles containing their vertices.  For any such face with ideal vertex $v$, we thus have $d(v)=0$, where $d(v)$ is the ``gluing invariant'' of \cite[\S 9.8]{Ratcliffe}, so $S$ is complete by Theorem 9.8.5 of \cite{Ratcliffe} (cf.~\cite[Prop.~3.4.18]{Th_book}).   \end{proof}

We next relate the deformation space $\mathfrak{D}(S,\calt)$ to the Teichm\"uller space $\mathfrak{T}_{g,n}$ of hyperbolic surfaces with the topological type of $S$, endowed with its standard topology (see eg. \cite{FaMa}).  Here we will regard the hyperbolic surfaces $S(\bd)$ from Lemma \ref{hyperbolize} as marked by the homeomorphism from $S$ described there.

\begin{proposition}\label{parametrize} For a complete, oriented, triangulated hyperbolic surface $(S,\calt)$ of finite area, with genus $g$ and $n$ cusps, the map $\mathfrak{D}(S,\calt)\to\mathfrak{T}_{g,n}$ given by $\bd\mapsto S(\bd)$ is continuous.\end{proposition}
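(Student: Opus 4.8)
The plan is to produce the hyperbolic structure $S(\bd)$ from Lemma \ref{hyperbolize} in a way that depends continuously on $\bd$, and then to invoke a standard criterion for convergence in $\mathfrak{T}_{g,n}$ — either via convergence of holonomy representations, or via Fenchel–Nielsen coordinates, or via $L$-bi-Lipschitz markings. The cleanest route is probably the holonomy representation: fix a basepoint in one face, say $F_1$, lift $F_1(\bd)$ to a fixed position in $\mathbb{H}^2$ (e.g.\ with a prescribed vertex at a prescribed point and a prescribed edge along a prescribed geodesic ray), then develop the triangulation $\calt(\bd)$ across $S$ by the usual process of attaching the adjacent triangle $F_i(\bd)$ along each shared edge. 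Since each $F_i(\bd)$ is determined up to isometry by its finite edge lengths — which are among the coordinates of $\bd$ — and the gluing isometries are determined by matching up corresponding edges, the developing map and hence the holonomy $\rho_\bd\co\pi_1 S\to\mathrm{Isom}^+(\mathbb{H}^2)$ vary continuously with $\bd$: each attachment is an explicit formula (in terms of the trigonometric data of the triangles, which by \eqref{angles} are smooth functions of the $d_j$ on the open set $U$) composed with the previous ones. Continuity of $\bd\mapsto\rho_\bd$ in the representation variety, together with the fact that each $\rho_\bd$ is discrete and faithful with quotient $S(\bd)$ marked as in Lemma \ref{hyperbolize}, gives continuity of $\bd\mapsto[S(\bd)]$ into $\mathfrak{T}_{g,n}$, since the map from (an open subset of) the representation variety to Teichm\"uller space is continuous.

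Concretely I would carry this out as follows. First I would set up the developing map: choose a lift $\widetilde F_1$ of $F_1(\bd)$ with vertices and edges placed at fixed locations determined by the relevant $d_j$'s via the side-angle-side data; this placement is a continuous (indeed real-analytic) function of $\bd\in\mathfrak{D}(S,\calt)$. Second, for each face $F_i$ and each edge $e$ of $\calt$ I would record the isometry of $\mathbb{H}^2$ that glues $F_i(\bd)$ to the adjacent face across $e$; because the two triangles share the same finite edge length along $e$ (the key consistency noted in the proof of Lemma \ref{hyperbolize}), this gluing isometry is well-defined and is again a continuous function of $\bd$. Third, following edge-paths in the dual $1$-skeleton of $\calt$ from $F_1$ and composing these gluing isometries, I would obtain the developing map on all lifts of all faces, and for a loop in $S$ based in $F_1$ the resulting composition is the holonomy $\rho_\bd(\gamma)$; this is a finite composition of continuous matrix-valued functions of $\bd$, hence continuous. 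Fourth, I would note that since the angle-sum conditions $A_x(\bd)=2\pi$ hold and the structure is complete (Lemma \ref{hyperbolize}), $\rho_\bd$ lands in the subset of discrete faithful representations uniformizing a surface of the correct topological type with the marking from Lemma \ref{hyperbolize}, and invoke the standard fact that the induced map to $\mathfrak{T}_{g,n}$ is continuous.

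I expect the main technical annoyance — though not a deep obstacle — to be bookkeeping the cusps: the non-compact faces are horocyclic ideal triangles, so their lifts are noncompact and the relevant parabolic holonomies must be handled with a little care, but the completeness argument already in Lemma \ref{hyperbolize} (the gluing invariant $d(v)=0$) shows the ideal vertices assemble into genuine cusps, and the holonomy of a peripheral loop is a parabolic whose fixed point and translation length vary continuously with the adjacent finite edge lengths. A secondary point is to make sure the marking is the one claimed: the homeomorphism $(S(\bd),\calt(\bd))\to(S,\calt)$ of Lemma \ref{hyperbolize} is visibly ``the same'' combinatorial identification for all $\bd$ (it sends $F_i(\bd)$ to $F_i$), so the marked hyperbolic surfaces $S(\bd)$ vary continuously in the marked sense, which is exactly what continuity into $\mathfrak{T}_{g,n}$ requires. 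The referee's simplified argument mentioned in the acknowledgements presumably streamlines precisely this developing-map construction; in any case, no estimate beyond continuity of the elementary trigonometric formulas \eqref{angles} is needed.
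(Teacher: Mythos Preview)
Your approach is correct but genuinely different from the paper's. You build the developing map and holonomy $\rho_{\bd}\co\pi_1 S\to\mathrm{Isom}^+(\mathbb{H}^2)$ directly from the triangle data, observe that each gluing isometry is a smooth function of the finite edge lengths, and conclude by continuity of the natural map from the character variety to $\mathfrak{T}_{g,n}$. The paper instead fixes an essential simple closed curve $\gamma$, constructs on $S(\bd')$ a piecewise-geodesic $\gamma'$ homotopic to the image of $\gamma$ whose length differs from that of $\gamma$ by at most some $\epsilon(\delta)\to 0$, and runs the same estimate with the roles reversed; continuity of $\bd\mapsto\ell_{S(\bd)}(\gamma)$ for every $\gamma$ then gives continuity into $\mathfrak{T}_{g,n}$ via the $9g-9$ theorem.

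Both routes are short. Yours packages all the trigonometry into a single statement (``the holonomy varies continuously'') and yields continuity of every invariant at once, at the cost of the bookkeeping you flag for the infinite edges: the gluing across a non-compact edge is only pinned down by the horocycle-matching condition, and you should say explicitly that this isometry depends continuously on the adjacent compact edge length. The paper's route avoids the representation variety entirely and needs only two explicit length formulas (the law of cosines and an upper-half-plane computation for arcs in horocyclic triangles), but leans on the $9g-9$ theorem as a black box. Incidentally, your guess that the referee's simplification ``presumably streamlines precisely this developing-map construction'' is off: the length-function argument in the paper \emph{is} the referee's simplification, so the original was likely closer to what you wrote.
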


\begin{proof}  We will show that for any essential simple closed curve $\gamma$ on $S$, the function $\bd\mapsto \ell_{S(\bd)}(\gamma)$ that measures the geodesic length of $\gamma$ in $S(\bd)$ is continuous.  It then follows from standard results, eg. the ``$9g-9$ theorem'' \cite[Theorem 10.7]{FaMa}, that $\bd\mapsto S(\bd)$ is continuous.

Fix $\bd\in\mathfrak{D}(S,\calt)$, and refer by $\gamma$ to an oriented geodesic representative of $\gamma$ on $S(\bd)$.  
For $\bd'$ near to $\bd$, we now construct a piecewise-geodesic $\gamma'$ on $S(\bd')$ which will be evidently isotopic to the image of $\gamma$ under the homeomorphism $S(\bd)\to S(\bd')$ described in Lemma \ref{hyperbolize}.  We will then show that the length of $\gamma'$ exceeds that of $\gamma$ by no more than some $\epsilon$ depending on $\delta = \max\{|\delta_i|\}$,  where $\delta_i = |d_i - d_i'|$ for each $i$, which approaches $0$ as $\delta\to 0$.

Partition $\gamma$ into arcs $\gamma_0,\hdots,\gamma_{k-1}$ with disjoint interiors such that $\gamma_j$ is adjacent to $\gamma_{j+1}$ for each $j$, and each $\gamma_j$ is either an edge of or (the generic case) properly embedded in a triangle of $\calt$.  For each $j$ we construct a geodesic arc $\gamma_j'$ in $S(\bd')$ that lies in the same triangle(s) of $\calt$ as $\gamma_j$, as follows.  For each endpoint $x$ of $\gamma_j$ that is a vertex of $\calt$, let the corresponding endpoint $x'$ of $\gamma_j'$ be the same vertex; if $x$ lies in the interior of a compact edge $e_{i_j}$ of $\calt$, then with $\delta_{i_j}$ as above let $x'$ lie on $e_{i_j}$ in $S(\bd')$ with $|d(x',v) - d(x,v)| = \delta_{i_j}/2$ for each vertex $v$ of $e_{i_j}$; and if $x$ is in the interior of a non-compact edge of $\calt$ then let $x'$ lie on the same edge in $S(\bd')$, at the same distance from its (finite) vertex.  Now let $\gamma' = \gamma_1'\cup\hdots\gamma_{k-1}'$.

For any fixed $j$, we will show that $|\ell_j'-\ell_j|\to 0$ as $\delta\to 0$, where $\ell_j$ and $\ell_j'$ are the respective lengths of $\gamma_j$ and $\gamma_j'$ in $S(\bd)$ and $S(\bd')$.  If $\gamma_j$ is an edge of $\calt$ this is obvious, so let us assume it is not. Then $\gamma_j$ cuts the triangle $T$ of $\calt$ containing it into two pieces, at least one of which is a triangle. If the vertex $v$ of the sub-triangle of $T$ that does not lie in $\gamma$ is finite, and the interior angle there is $\theta_j$, then by the hyperbolic law of cosines $\gamma_j$ has length $\ell_j$ given by
$$ \cosh\ell_j = \cosh d(x_j,v) \cosh d(x_{j+1},v) - \sinh d(x_j,v) \sinh d(x_{j+1},v) \cos\theta_j. $$
Here $x_j$ and $x_{j+1}$ are the endpoints of $\gamma_j$, and $\theta_j$ is given in terms of $\bd$ by the left side of the formula (\ref{angles}).  For $\ell_j'$ we substitute $x_j'$, $x_{j+1}'$ and $\theta_j'$ above.  It is clear from this formula that $|\ell_j' - \ell_j| \to 0$ as $\delta\to 0$.

If the vertex $v$ described above is ideal then we claim that the length $\ell_j$ of $\gamma_j$ satisfies
\[ \cosh \ell_j = \frac{e^{-d(x_j,v_j)-d(x_{j+1},v_{j+1})}}{2}\left(4\sinh^2(d_{i_j}/2) + e^{2d(x_j,v_j)}+e^{2d(x_{j+1},v_{j+1})}\right), \]
where $v_j$ is the finite vertex of the edge $e$ of $T$ containing $x_j$, $v_{j+1}$ is the finite vertex of the edge containing $x_{j+1}$, and $d_{i_j}$ is the length of the compact edge of $T$. 

This follows from explicit computations in the upper half-plane model.  Applying an isometry, we may take $T$ inscribed in the horocycle $C=\mathbb{R}+i$, with $v_j = i$ and $v_{j+1} = \theta+i$, where $\theta = 2\sinh(d_{i_j}/2)$ is the distance from $v_j$ to $v_{j+1}$ along $C$.  Then $x_j = iy_0$ and $x_{j+1} = \theta+iy_1$ for $y_0,y_1>1$ satisfying $e^{d(x_j,v_j)} =y_0$ and $e^{d(x_{j+1},v_{j+1})} = y_1$.  Theorem 1.2.6(ii) of \cite{Katok} now proves the claim, giving:
\[ \cosh \ell = 1 + \frac{\theta^2 + (y_0-y_1)^2}{2y_0y_1} = \frac{\theta^2 + y_0^2+y_1^2}{2y_0y_1}  \]

To compute the length $\ell_j'$ of the corresponding arc $\gamma_j'$ we simply replace $d_{i_j}$ by $d_{i_j}'$ above, where $d_{i_j}'$ is the length in $S(\bd')$ of the compact edge of $T$.  Convergence of $\ell_j'$ to $\ell_j$ thus follows as in the previous case.

Since the length of $\gamma$ is $\sum\ell_i$, and the geodesic length of $\gamma'$ is at most $\sum\ell_i'$, this exceeds the length of $\gamma$ by no more than some $\epsilon = \epsilon(\delta)$, which approaches $0$ as $\delta\to 0$.  From the formulas above we see that the dependence of $\epsilon$ on $\delta$ is uniform on compact subsets of the open set $U$ of Definition \ref{triangle def}.  Therefore we can apply the same argument with the roles of $\gamma$ and $\gamma'$ reversed to show that the length of $\gamma$ exceeds the geodesic length of $\gamma'$ by no more than some $\epsilon' = \epsilon(\delta)$ which also approaches $0$ as $\delta\to 0$.  Continuity follows.
\end{proof}

\begin{proposition}\label{Delaunay control}  Suppose $(S,\calt)$ is a complete, triangulated hyperbolic surface of finite area with vertex set $\{x\}$ such that the entire collection of geodesic arcs of length $2\mathit{injrad}_x(S)$ based at $x$ is a set $e_{j_1},\hdots,e_{j_n}$ of edges of $\calt$.  Then there is a neighborhood $V$ in $\mathfrak{D}(S,\calt)$ of the edge length collection of $\calt$ such that for any $\bd\in V$, if $x_{\bd}$ is the vertex of the triangulated hyperbolic surface $(S(\bd),\calt(\bd))$ of Lemma \ref{hyperbolize} then $\mathit{injrad}_{x(\bd)} S(\bd) = \frac{1}{2}\min\{d_{j_i}\}_{i=1}^n$.\end{proposition}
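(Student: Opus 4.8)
The plan is to show that for $\bd$ close to the edge-length collection $\bd_0$ of $\calt$ in $\mathfrak{D}(S,\calt)$, the geodesic arcs based at $x_\bd$ in $S(\bd)$ of length below a fixed threshold $L' > 2\injrad_x(S)$ are precisely the edges $e_{j_1}(\bd),\dots,e_{j_n}(\bd)$ of $\calt(\bd)$; since each $e_{j_i}(\bd)$ has length $d_{j_i}$ and is a geodesic arc based at $x_\bd$, the shortest geodesic arc based at $x_\bd$ then has length $\min_i d_{j_i}$, which is the assertion. One inequality is immediate and holds for all $\bd\in\mathfrak{D}(S,\calt)$: each compact edge of $\calt(\bd)$ is a geodesic arc with both endpoints at the unique vertex $x_\bd$, so in particular $2\injrad_{x_\bd}S(\bd)\leq\min_i d_{j_i}$.

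First I would record a spectral gap in $S$ itself. The geodesic arcs based at $x$ in $S$ correspond bijectively to the nontrivial elements of $\pi_1(S,x)$: fixing a lift $\tilde x$ of $x$ to $\mathbb{H}^2$, the arc associated to $\gamma$ is the projection of the geodesic segment from $\tilde x$ to $\gamma\tilde x$, and its length is the hyperbolic distance between these points. Since $\pi_1 S$ acts properly discontinuously on $\mathbb{H}^2$, its orbit of $\tilde x$ meets any ball in finitely many points, so the set of arc lengths is discrete. By hypothesis the minimal arc length is $M := 2\injrad_x(S)$ and the arcs realizing it are exactly $e_{j_1},\dots,e_{j_n}$; hence there is $L > M$ such that the only geodesic arcs based at $x$ in $S$ of length less than $L$ are $e_{j_1},\dots,e_{j_n}$. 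Fix $L'\in(M,L)$.

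The main step is a multiplicative refinement of the length estimates in the proof of Proposition \ref{parametrize}: I claim the homeomorphism $h_\bd\co S\to S(\bd)$ of Lemma \ref{hyperbolize} — which preserves triangulations, hence carries $x$ to $x_\bd$ — may be chosen so that $h_\bd$ and $h_\bd^{-1}$ are $C(\bd)$-Lipschitz with $C(\bd)\to 1$ as $\bd\to\bd_0$. Indeed $h_\bd$ carries each face $F_i$ onto $F_i(\bd)$, a compact hyperbolic triangle or horocyclic ideal triangle whose finite side lengths depend continuously on $\bd$ and equal those of $F_i$ at $\bd_0$; between triangles with nearby finite side lengths one can choose homeomorphisms that are bi-Lipschitz with constant tending to $1$ and that restrict to the obvious arclength-rescaling maps on shared compact edges, and these assemble to the required $h_\bd$ (the non-compact horocyclic pieces needing a small but routine separate check, since there $h_\bd$ must stretch transverse to the cusp direction, though only by a factor tending to $1$). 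Granting this, choose a neighborhood $V$ of $\bd_0$ in $\mathfrak{D}(S,\calt)$ on which $C(\bd) < L/L'$ and, using continuity of the coordinate functions, $d_{j_i} < L'$ for each $i$. For $\bd\in V$, let $\beta$ be a geodesic arc based at $x_\bd$ in $S(\bd)$ with $\ell_{S(\bd)}(\beta) < L'$, and let $\alpha$ be the geodesic arc based at $x$ in $S$ homotopic rel endpoints to $h_\bd^{-1}(\beta)$. Then $\ell_S(\alpha)\leq \ell_S(h_\bd^{-1}(\beta))\leq C(\bd)\,\ell_{S(\bd)}(\beta) < C(\bd)L' < L$, so $\alpha = e_{j_i}$ for some $i$; since $h_\bd$ carries $e_{j_i}$ to the geodesic arc $e_{j_i}(\bd)$, uniqueness of geodesic representatives gives $\beta = e_{j_i}(\bd)$. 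Conversely $\ell_{S(\bd)}(e_{j_i}(\bd)) = d_{j_i} < L'$ for each $i$. Hence the geodesic arcs based at $x_\bd$ in $S(\bd)$ of length less than $L'$ are exactly $e_{j_1}(\bd),\dots,e_{j_n}(\bd)$; as $e_{j_1}(\bd)$ is one of them, the shortest geodesic arc based at $x_\bd$ has length $\min_i d_{j_i}$, i.e.\ $\injrad_{x_\bd}S(\bd) = \tfrac12\min_i d_{j_i}$.

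I expect the main obstacle to be the bi-Lipschitz control claimed in the previous paragraph. The additive estimates in the proof of Proposition \ref{parametrize} degrade with the number of sub-segments of the curve being deformed, so they do not by themselves prevent short geodesic arcs of high combinatorial complexity from appearing in $S(\bd)$; the uniformity over all arcs that we need must instead come from the fact that only finitely many triangle shapes occur and that the deforming maps can be arranged to agree along glued edges, with the non-compact horocyclic triangles treated separately.
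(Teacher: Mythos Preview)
Your approach is correct but takes a different route from the paper's. You reduce to a single global lemma---that the marking homeomorphism $h_\bd\co S\to S(\bd)$ can be chosen bi-Lipschitz with constant $C(\bd)\to 1$---and then transport the arc-length spectrum through it, so the spectral gap in $S$ forces the short arcs of $S(\bd)$ to be exactly the $e_{j_i}(\bd)$. The paper instead stays in the universal cover: it fixes a finite union $P$ of lifted triangles covering a ball of radius slightly larger than $2\injrad_x(S)$ about $\tilde x$, and for $\bd'$ near $\bd$ controls directly, via the hyperbolic laws of sines and cosines, the distances from $\tilde x'$ both to the vertices of the corresponding union $P'$ and to each edge in the frontier of $P'$. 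The paper's argument is more hands-on but never needs a globally defined Lipschitz map, so it sidesteps the cusp issue you flag; your argument is conceptually cleaner once the bi-Lipschitz lemma is in hand, and that lemma has independent value. The horocyclic case of your lemma is indeed routine---in the upper half-plane a horizontal rescaling $(x,y)\mapsto((\theta'/\theta)x,y)$ handles the cusp end with constant $\max(\theta'/\theta,\theta/\theta')$, and a compactly supported correction near the finite edge repairs both the edge-matching and the image---but it does need to be written out.
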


\begin{proof}  The collection of geodesic arcs in $S$ based at $x$ is in 1-1 correspondence with those in $\mathbb{H}^2$ joining $\tilde{x}$ to other points of $\pi^{-1}(x)$, where $\pi\co\mathbb{H}^2\to S$ is a locally isometric universal cover and $\tilde{x}$ is a fixed element of $\pi^{-1}(x)$.  Fix some $R>0$ that is slightly larger than $2\mathit{injrad}_x(S)$, and let $P\subset\mathbb{H}^2$ be the union of lifts of triangles of $\calt$ that intersect the closed ball of radius $R$ about $\tilde{x}$.  This is a finite union since the lifted triangulation is locally finite.

For $\bd'$ near the edge length collection $\bd$ of $S$ in $\mathfrak{D}(S,\calt)$, let $P'$ be the corresponding union of triangles lifted from $(S(\bd'),\calt(\bd'))$.  That is, fix a locally isometric universal cover $\pi'\co\mathbb{H}^2\to S(\bd')$ and some $\tilde{x}'\in(\pi')^{-1}(x')$, where $x'$ is the vertex of $\calt(\bd')$, and let $P'$ be the image of $P$ under the lift that takes $\tilde{x}$ to $\tilde{x}'$ of the marking $S\to S(\bd')$.  (Recall that this map takes triangles to triangles.)

For each geodesic arc $\gamma$ in $S$ based at $x$ and any fixed $\epsilon>0$, arguing as in the proof of Proposition \ref{parametrize} shows that $\bd'$ can be chosen near enough to $\bd$ that the geodesic arc in $S(\bd')$ based at $x'$ in the based homotopy class of the image of $\gamma$ has length less than $\epsilon$ away from the length of $\gamma$.  In particular, there exists $\delta>0$ such that if $\max\{|d_i-d_i'|\}<\delta$ then for each vertex $v$ of $P$ at distance greater than $2\mathit{injrad}_x(S)$ from $\tilde{x}$, the image of $v$ has distance to $\tilde{x}'$ greater than $i(\bd')\doteq\min\{\ell_{j_1}',\hdots,\ell_{j_n}'\}$, where $\ell_{j_i'}$ is the length of $e_{j_i}$ in $S(\bd')$ for each $i$.

We now consider geodesic arcs in $S$ based at $x$ whose lifts based at $\tilde{x}$ exit $P$.  For each such arc the analogous fact holds for its correspondent in $S(\bd')$.  We will thus complete the proposition's proof by showing that for small enough $\delta>0$, the closest point to $\tilde{x}'$ on each edge in the frontier of $P'$ is at distance greater than $i(\bd')$ (defined above) from it, whence $i(\bd')$ is twice the injectivity radius of $S(\bd')$ at $x'$.

For a compact edge $e$ in the frontier of $P$, let $T$ be the triangle with one edge at $e$ and opposite vertex $\tilde{x}$. The closest point of $e$ to $\tilde{x}$ is in its interior if and only if the angles of $T$ are less than $\pi/2$ at each endpoint of $e$.  In this case the geodesic arc from $\tilde{x}$ to its closest point on $e$ intersects $e$ at right angles, and by the hyperbolic law of sines the distance $h$ from $\tilde{x}$ to $e$ satisfies $\sinh h  = \sinh \ell \sin\theta$.  Here $\ell$ is the distance from $\tilde{x}$ to an endpoint $v$ of $e$, and $\theta$ is the interior angle of $T$ at $v$.

If $T'$ is the corresponding triangle in $P'$ then for $\bd'$ near $\bd$, each edge length of $T'$ is near the corresponding edge length of $T$, as we have already remarked, and it follows from the hyperbolic law of cosines that the same holds for the angles of $T$ and $T'$.  In particular, if the closest point of $e$ to $\tilde{x}$ is in the interior of $e$ then for $\bd'$ near enough to $\bd$, the closest point of the corresponding edge $e'$ to $\tilde{x}'$ is also in its interior, and by the hyperbolic law of sines the distance $h'$ from $\tilde{x}'$ to $e'$ approaches $h$ as $\bd'\to\bd$.  The remaining case is straightforward.

For a non-compact edge $e$ in the frontier of $P$, if the nearest point of $e$ to $\tilde{x}$ is in its interior then we again use the formula $\sinh h = \sinh \ell\sin\phi$, where now $\ell$ is the length of the geodesic arc from $\tilde{x}$ to the finite endpoint $v$ of $e$, and $\phi$ is the angle from this arc to $e$.  It follows as before that $\ell'\to\ell$ as $\bd'\to\bd$.  To see that the corresponding angle $\phi'$ approaches to $\phi$ as $\bd'\to\bd$ we note that $\phi = \theta+\delta$, where $\delta$ is the interior angle at $v$ of the horocyclic triangle $T_0$ in $P$ containing $e$, and $\theta$ is the interior angle at $v$ of the triangle determined by $\tilde{x}$ and the finite side $f$ of $T_0$.  The corresponding angles $\delta'\to\delta$ and $\theta'\to\theta$, so $\phi'\to\phi$ as $\bd'\to\bd$.\end{proof}

\section{The Delaunay tessellation}\label{Delaunay}

In this section, for a hyperbolic surface $S$ and $x\in S$ we define the Delaunay tessellation of $(S,x)$ (Definition \ref{Delaunay dfn} below) by projecting Delaunay cells of $\pi^{-1}(x)$, where $\pi\co\mathbb{H}^2\to S$ is the universal cover.  Here the Delaunay tessellation of a locally finite, lattice-invariant subset $\widetilde\cals\subset\mathbb{H}^2$, in the sense of \cite[Theorem 5.1]{DeB_Voronoi} (which itself is the specialization to two dimensions of \cite[Theorem 6.23]{DeB_Delaunay}), is characterized by the \textit{empty circumcircles condition}:

\begin{quote}For each circle or horocycle $H$ of $\mathbb{H}^2$ that intersects $\widetilde\cals$ and bounds a disk or horoball $B$ with $B\cap\widetilde\cals=H\cap\widetilde\cals$, the closed convex hull of $H\cap\widetilde\cals$ in $\mathbb{H}^2$ is a Delaunay cell.  Each Delaunay cell has this form.\end{quote}

In proving Theorem \ref{only max} we will use triangulations \textit{compatible} with the Delaunay tessellation of $(S,x)$, in the sense of Lemma \ref{compatible}.  There are three advantages to working with the Delaunay tessellation.  First, every geodesic arc of length $2\mathit{injrad}_x(S)$ based at $x$ is a Delaunay edge, as we prove in Lemma \ref{injrad edge}.  Second, by construction Delaunay cells are cyclic or horocyclic; that is, inscribed in metric circles or horocycles, respectively.  In \cite{DeB_cyclic_geom} there are calculus formulas describing the derivative of area with respect to side length for such polygons.

Finally, the Delaunay tessellation of $\cals$ contains the geometric dual to the \textit{Voronoi tessellation} of $\cals$, which has two-cells of the form
$$V_{\bx} = \{y\in\mathbb{H}^2\,|\,\dist(y,\bx)\leq \dist(y,\bx')\ \forall\ \bx'\in\cals \}, $$
for each $\bx\in\cals$.  See eg.~\cite[\S 5]{DeB_Delaunay}. Its edges are intersections $V_{\bx}\cap V_{\bx'}$ containing at least two points.  The \textit{geometric dual} to any such edge is the geodesic arc joining $\bx$ to $\bx'$.  In Section \ref{the proof, man} we will exploit the geometric duality relation using some results from \cite[\S 2.1]{DeB_Voronoi} that show how the Voronoi tessellation encodes certain extra structure associated to ``non-centered'' Delaunay two-cells.  This helps us overcome the central difficulty in using deformations via triangulations, which is that the area of cyclic polygons is not monotonic in their side lengths.

The first result we will prove here is mostly \cite[Corollary 5.2]{DeB_Voronoi}, which is again the specialization of a result from \cite{DeB_Delaunay}, Corollary 6.27 there.  Theorem 5.1 of \cite{DeB_Voronoi} asserts for a set $\widetilde\cals$ invariant under a lattice $\Gamma$ that a Delaunay cell of $\widetilde\cals$ is inscribed in a horocycle $C$ if and only if its stabilizer in $\Gamma$ is a parabolic subgroup $\Gamma_u$ of $\Gamma$ that fixes the ideal point $C$.  Such cells are the primary concern of this result.

\begin{corollary}\label{horocyclic char}  For a complete, oriented, finite-area hyperbolic surface $F$ with locally isometric universal cover $\pi\co\mathbb{H}^2\to F$, and a finite set $\cals\subset F$, there are finitely many $\pi_1 F$-orbits of Delaunay cells of $\widetilde{\cals}=\pi^{-1}(\cals)$.  The interior of each compact Delaunay cell embeds in $F$ under $\pi$.  For a cell $C_u$ with parabolic stabilizer $\Gamma_u$, $\pi|_{\mathit{int}\,C_u}$ factors through an embedding of $\mathit{int}\,C_u/\Gamma_u$ to a set containing a cusp of $F$.

A fundamental domain in a parabolic-invariant cell $C_u$ for the action of its stabilizer $\Gamma_u$ is a horocyclic ideal polygon whose finite-length edges are edges of $C_u$.\end{corollary}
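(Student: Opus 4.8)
The plan is to read off everything but the final paragraph from \cite[Corollary 5.2]{DeB_Voronoi}, the two-dimensional specialization of \cite[Corollary 6.27]{DeB_Delaunay}: that reference supplies the finiteness of the set of $\pi_1 F$-orbits of Delaunay cells of $\widetilde\cals$, the embedding of the interior of each compact Delaunay cell under $\pi$, and the statement that for a cell $C_u$ with parabolic stabilizer $\Gamma_u$ the map $\pi|_{\mathit{int}\,C_u}$ factors through an embedding of $\mathit{int}\,C_u/\Gamma_u$ to a set containing a cusp of $F$.  It remains to produce the fundamental domain claimed in the last paragraph, and here I would argue directly.

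So let $C_u$ be a Delaunay cell of $\widetilde\cals$ whose stabilizer $\Gamma_u\leq\pi_1 F$ is parabolic; since $F$ has finite area, $\pi_1 F$ is a lattice and $\Gamma_u$ is infinite cyclic.  By \cite[Theorem 5.1]{DeB_Voronoi}, $C_u$ is inscribed in a horocycle $C$ and $\Gamma_u$ fixes the ideal point $u$ of $C$, and by the empty circumcircles condition $C_u$ is the closed convex hull of $C\cap\widetilde\cals$.  Since $\widetilde\cals$ is locally finite and $\Gamma_u$ acts freely on $C$ by translation, $C\cap\widetilde\cals$ is a nonempty, discrete, $\Gamma_u$-invariant — hence bi-infinite — set of points of $C$ whose two ends limit to $u$.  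A routine computation in the upper half-plane model, placing $C$ at $\mathbb{R}+i$ and $u$ at $\infty$, then shows that $C_u$ is an infinite ideal polygon: its finite vertices are exactly the points of $C\cap\widetilde\cals$, its only ideal vertex is $u$, its edges are the geodesic chords joining consecutive points of $C\cap\widetilde\cals$ (each of finite length), and $\mathrm{Re}\,z$ varies monotonically along each edge.

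Keeping this normalization, write $\Gamma_u=\langle g\rangle$ with $g\co z\mapsto z+\ell$, $\ell>0$, let $v_1,\hdots,v_m$ be the points of $C\cap\widetilde\cals$ with real part in $[0,\ell)$ listed in order of increasing real part, and set $v_{m+1}=g(v_1)$, which has real part $\ell$.  I would take $P=\{z\in C_u\,|\,0\leq\mathrm{Re}\,z\leq\ell\}$ and check two things.  First, $P$ is a fundamental domain for $\Gamma_u$ acting on $C_u$: the translates $g^n P=\{z\in C_u\,|\,n\ell\leq\mathrm{Re}\,z\leq(n+1)\ell\}$ cover $C_u$ and have pairwise disjoint interiors.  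Second, $P$ is a horocyclic ideal polygon whose finite-length edges are edges of $C_u$: by the monotonicity of $\mathrm{Re}\,z$ along edges of $C_u$, the part of $\partial C_u$ lying in the strip $0\leq\mathrm{Re}\,z\leq\ell$ is exactly the union of the $m$ edges $[v_1,v_2],\hdots,[v_m,v_{m+1}]$; and since $C_u$ lies in the closed horoball $\{\mathrm{Im}\,z\geq 1\}$ bounded by $C$ while being convex with $v_1$ and $u$ in its closure, the set $C_u\cap\{\mathrm{Re}\,z=0\}$ is exactly the vertical geodesic ray from $v_1$ to $u$, and symmetrically at $\mathrm{Re}\,z=\ell$.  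Hence $\partial P$ consists of the $m$ finite-length edges $[v_1,v_2],\hdots,[v_m,v_{m+1}]$ of $C_u$ together with those two rays, so $P$ is a horocyclic ideal polygon with finite vertices $v_1,\hdots,v_{m+1}$ on $C$, single ideal vertex $u$, and finite-length edges exactly the $[v_i,v_{i+1}]$, which are edges of $C_u$, as required.

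The step requiring the most care, and the one I expect to be the main obstacle, is the structural claim about $C_u$ in the second paragraph: that the closed convex hull of the bi-infinite discrete set $C\cap\widetilde\cals$ on the horocycle $C$ has boundary precisely the union of the geodesic chords between consecutive points, has $u$ as its unique ideal point, and has $\mathrm{Re}\,z$ monotone along each chord.  In the upper half-plane model this reduces to comparing the Euclidean semicircles carrying successive chords: each such chord bulges into the horoball bounded by $C$, and any point on the $u$-side of a chord lies strictly outside the semicircle through the neighboring pair of points and hence is separated from $C\cap\widetilde\cals$ by that geodesic — this is elementary but slightly tedious, and could alternatively be extracted from the arguments surrounding \cite[Theorem 5.1]{DeB_Voronoi}.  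With that structure in hand, the remaining verifications (that $P$ is a fundamental domain and the identification of $\partial P$) are bookkeeping of the kind indicated above.
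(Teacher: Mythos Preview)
Your proposal is correct in outline and differs from the paper's own proof mainly in how the fundamental-domain claim is handled.  Like you, the paper cites \cite[Corollary~5.2]{DeB_Voronoi} for the first paragraph.  For the second, however, it invokes \cite[Lemma~5.7]{DeB_Voronoi}, which already packages the structural claim you flag as ``the step requiring the most care'': that lemma supplies an enumeration $\{s_i\}_{i\in\mathbb{Z}}$ of the vertices of $C_u$ with $s_i,s_{i+1}$ bounding an edge $\gamma_i$ and $g(s_i)=s_{i+k}$ for a generator $g$ of $\Gamma_u$.  The paper then takes the fundamental domain to be the union $T_i\cup\cdots\cup T_{i+k-1}$ of horocyclic ideal triangles $T_j$ (base $\gamma_j$, ideal vertex $u$) and cites \cite[Proposition~3.8]{DeB_cyclic_geom} to identify this union as a horocyclic ideal $(k+1)$-gon.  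Your direct upper-half-plane argument is more self-contained but essentially reproves the content of that lemma; the paper's route is shorter precisely because the structural work was already done elsewhere.

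One small slip to fix: you define $v_1,\ldots,v_m$ as the vertices with real part in $[0,\ell)$ but then assert that $C_u\cap\{\mathrm{Re}\,z=0\}$ is the vertical ray from $v_1$ to $u$, which requires $\mathrm{Re}\,v_1=0$.  Either use the remaining horizontal-translation freedom in your normalization to place a vertex at $i$, or take the strip $\mathrm{Re}\,v_1\le\mathrm{Re}\,z\le\mathrm{Re}\,v_1+\ell$ instead; with that adjustment the argument goes through.
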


\begin{proof}  The first part of this result was recorded as Corollary 5.2 of \cite{DeB_Voronoi}.  The second part follows from Lemma 5.7 there.  This result implies that the vertices of $C_u$ can be enumerated as $\{s_i\,|\,i\in\mathbb{Z}\}$ so that $s_i$ and $s_{i+1}$ bound an edge $\gamma_i$ of $C_u$ for each $i$, and $g(s_i) = s_{i+k}$ for some fixed $k\in\mathbb{Z}$, where $g$ is the generator of $\Gamma_u$.  It follows that a fundamental domain for the $\Gamma_u$-action is the non-overlapping union of horocyclic triangles $T_i\cup T_{i+1}\hdots \cup T_{i+k-1}$ defined in Lemma 5.7 for any fixed $i$.  This is a horocyclic ideal $(k+1)$-gon, see \cite[Prop.~3.8]{DeB_cyclic_geom}.\end{proof}

\begin{definition}\label{Delaunay dfn}For a complete, oriented, finite-area hyperbolic surface $S$ and $x\in S$, we will call the \textit{Delaunay tessellation of $(S,x)$} the projection to $S$ of the Delaunay tessellation of $\pi^{-1}(x)$, for some fixed universal cover $\pi\co\mathbb{H}^2\to S$.\end{definition}

\begin{lemma}\label{injrad edge}  For a complete, oriented, finite-area hyperbolic surface $S$ and $x\in S$, every geodesic arc based at $x$ with length $2\mathit{injrad}_x(S)$ is an edge of the Delaunay tessellation of $(S,x)$.  In particular, the injectivity radius of $S$ at $x$ is half the minimum edge length of the Delaunay tessellation.\end{lemma}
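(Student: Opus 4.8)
The plan is to work in the universal cover and show that a shortest arc based at $x$ lifts to a geodesic segment whose endpoints are the two nearest points of $\widetilde\cals = \pi^{-1}(x)$ to each other, and then invoke the empty circumcircles condition to conclude this segment is a Delaunay edge. Fix a locally isometric universal cover $\pi\co\mathbb{H}^2\to S$ and $\widetilde x\in\widetilde\cals$. Geodesic arcs based at $x$ correspond bijectively to geodesic segments in $\mathbb{H}^2$ from $\widetilde x$ to other points of $\widetilde\cals$, with length preserved, so $2\injrad_x(S)$ equals the distance from $\widetilde x$ to the nearest other point $\widetilde y$ of $\widetilde\cals$; by homogeneity of the configuration this is in fact the minimal distance between any two distinct points of $\widetilde\cals$. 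Let $\sigma$ be such a shortest arc and $[\widetilde x,\widetilde y]$ its lift; write $r = \injrad_x(S) = \tfrac12\,\dist(\widetilde x,\widetilde y)$, and let $m$ be the midpoint of $[\widetilde x,\widetilde y]$.

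First I would observe that the open metric ball $B = B(m,r)$ meets $\widetilde\cals$ in exactly $\{\widetilde x,\widetilde y\}$ — equivalently, no point of $\widetilde\cals$ lies in the open ball, and $\widetilde x,\widetilde y$ lie on its boundary circle $H = \partial B(m,r)$. Indeed $\dist(m,\widetilde x) = \dist(m,\widetilde y) = r$ puts both on $H$. If some $\widetilde z\in\widetilde\cals$ had $\dist(m,\widetilde z) < r$, then $\widetilde z$ is distinct from $\widetilde x$ and $\widetilde y$ (both at distance exactly $r$), and a hyperbolic-geometry estimate shows $\widetilde z$ would be within distance $2r$ of $\widetilde x$ or of $\widetilde y$: the closed ball $\overline{B(m,r)}$ has diameter $2r$, so $\dist(\widetilde z,\widetilde x)\le 2r$, with equality only if $\widetilde z,m,\widetilde x$ are collinear with $m$ between, i.e.\ only if $\widetilde z = \widetilde y$. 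Since $\widetilde z\ne\widetilde y$ this forces $\dist(\widetilde z,\widetilde x) < 2r$, contradicting minimality of $\dist(\widetilde x,\widetilde y) = 2r$ among distances between distinct points of $\widetilde\cals$. Hence $B\cap\widetilde\cals = H\cap\widetilde\cals$, and $H$ is an empty circumcircle in the sense of Section \ref{Delaunay}.

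Now the empty circumcircles condition says the closed convex hull of $H\cap\widetilde\cals$ is a Delaunay cell. Since $\widetilde x,\widetilde y\in H\cap\widetilde\cals$ are distinct, that hull is at least one-dimensional and contains the segment $[\widetilde x,\widetilde y]$; moreover $\widetilde x$ and $\widetilde y$ are consecutive vertices of the cell, because the open arc of $H$ between them on either side, together with the chord $[\widetilde x,\widetilde y]$, bounds a region inside $\overline B$ which contains no other point of $H\cap\widetilde\cals$ — any such point lying on that sub-arc would be within distance $< 2r$ of one of $\widetilde x,\widetilde y$ by the same diameter estimate, again contradicting minimality. (If $H\cap\widetilde\cals = \{\widetilde x,\widetilde y\}$, the cell is the edge $[\widetilde x,\widetilde y]$ itself; otherwise it is a cyclic polygon having $[\widetilde x,\widetilde y]$ as an edge.) Either way $[\widetilde x,\widetilde y]$ is a Delaunay edge, so its projection $\sigma$ is an edge of the Delaunay tessellation of $(S,x)$. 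The final sentence follows: every edge of the Delaunay tessellation projects from a Delaunay edge $[\widetilde x,\widetilde z]$, hence has length $\dist(\widetilde x,\widetilde z)\ge 2r = 2\injrad_x(S)$, while we have just exhibited a Delaunay edge of length exactly $2\injrad_x(S)$.

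I expect the one genuinely non-formal point to be the ``diameter estimate'' used twice above: that in $\mathbb{H}^2$, two points on a circle of radius $r$ are at distance at most $2r$, with equality exactly for antipodal pairs, and more quantitatively that a point strictly inside the closed ball is at distance strictly less than $2r$ from every boundary point except possibly one. This is elementary — it follows from the triangle inequality through the center together with the strict convexity of hyperbolic balls, or from the hyperbolic law of cosines applied to the triangle with vertices at the center and the two points — but it is the geometric heart of the argument and should be stated carefully. Everything else is bookkeeping with the covering space and a direct appeal to the empty circumcircles characterization.
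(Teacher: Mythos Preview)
Your proof is correct and follows essentially the same approach as the paper: lift to the universal cover, take the disk of radius $r=\injrad_x(S)$ about the midpoint of the lifted arc, and use the diameter estimate to show that no point of $\widetilde\cals$ other than the two endpoints lies in this disk, so the empty circumcircles condition makes the segment a Delaunay edge. The paper's version is simply more compressed --- it asserts in one line that every other point of the closed disk $B$ has distance strictly less than $2r$ from an endpoint of $\tilde\gamma$, which immediately forces $B\cap\widetilde\cals=\partial\tilde\gamma$ and bypasses your separate treatment of the open ball and the boundary circle (so your ``otherwise it is a cyclic polygon'' case never actually arises).
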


\begin{proof}  Every Delaunay edge of $(S,x)$ is a non-constant geodesic arc with both endpoints at $x$, so its length is at least $2\mathit{injrad}_x(S)$.  For a closed geodesic arc $\gamma$ of length $2\mathit{injrad}_x(S)$ based at $x$, let $\tilde{\gamma}$ be a lift of $\gamma$ to $\mathbb{H}^2$.  The metric disk $B$ of radius $\mathit{injrad}_x(S)$ centered at the midpoint of $\tilde{\gamma}$ intersects $\pi^{-1}(x)$ in the endpoints of $\tilde{\gamma}$.  Every other point of $B$ has distance less than $2\mathit{injrad}_x(S)$ from the endpoints of $\tilde{\gamma}$, so $B\cap\pi^{-1}(x) = \partial\tilde{\gamma}$.  It follows that $\tilde{\gamma}$ is a Delaunay edge of $\pi^{-1}(x)$, hence that $\gamma$ is a Delaunay edge of $(S,x)$.\end{proof}

\begin{lemma}\label{compatible}  For any complete, oriented, hyperbolic surface $S$ of finite area and $x\in S$, there is a decomposition $\calt$ of $S$ into compact and horocyclic ideal triangles that is \mbox{\rm compatible} with the Delaunay tessellation of $(S,x)$ in the sense that its vertex set is $\{x\}$ and each edge of the Delaunay tessellation is an edge of $\calt$.\end{lemma}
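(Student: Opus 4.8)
The plan is to build $\calt$ by first taking the Delaunay tessellation $\mathcal{D}$ of $(S,x)$ and then subdividing each of its cells into compact and horocyclic ideal triangles, being careful that the subdivisions introduce no new vertices and agree on shared edges. Since every Delaunay edge already has both endpoints at $x$ and no Delaunay cell has a vertex other than $x$, any such subdivision will automatically have vertex set $\{x\}$ and contain every Delaunay edge, which is exactly what compatibility requires.

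First I would work upstairs in $\mathbb{H}^2$, where $\widetilde{\mathcal{D}}$ is the Delaunay tessellation of $\widetilde{\cals} = \pi^{-1}(x)$. By Corollary \ref{horocyclic char} there are finitely many $\pi_1 S$-orbits of Delaunay cells, each compact cell embeds under $\pi$, and each parabolic-invariant cell $C_u$ has a fundamental domain for $\Gamma_u$ that is a horocyclic ideal polygon whose finite-length sides are edges of $C_u$. So it suffices to triangulate (i) each compact Delaunay polygon and (ii) each such horocyclic ideal fundamental polygon, in a $\pi_1 S$-equivariant way, using only the existing vertices. For a compact cyclic polygon with vertices $v_1,\dots,v_m$ (on a common circle), I would simply fan-triangulate from $v_1$: the diagonals $v_1v_j$ are geodesic arcs between points of $\widetilde{\cals}$, and each resulting triangle is compact. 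For a horocyclic ideal polygon as in Corollary \ref{horocyclic char} — whose description there already exhibits it as a non-overlapping union of the horocyclic triangles $T_i,\dots,T_{i+k-1}$ of \cite[Lemma 5.7]{DeB_Voronoi}, each with its ideal vertex at the horocycle's ideal point and its two finite vertices on the horocycle — I would use precisely that decomposition; it consists of horocyclic ideal triangles in the sense required, with no extra vertices, and its edges are the finite-length edges $\gamma_i$ of $C_u$ together with the ``infinite'' edges running to the ideal point. To make the whole thing equivariant and hence descend to $S$, I choose one subdivision per $\pi_1 S$-orbit of cells and translate it around by the group; for parabolic cells this is automatic from the equivariance of the Lemma 5.7 triangulation, and for compact cells one only needs to pick, once and for all, a basepoint vertex in each orbit representative.

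Next I would check that these choices glue consistently along shared edges. A compact Delaunay edge $e$ lies on the boundary of two Delaunay cells, and in the subdivision it is simply an edge of $\calt$ not further cut, so the two sides agree trivially. A finite-length edge of a parabolic cell $C_u$ is, by Corollary \ref{horocyclic char}, one of the $\gamma_i$, i.e.\ an edge of $C_u$ itself, hence again not subdivided and shared cleanly with the neighboring cell across it. The infinite edges introduced inside a horocyclic polygon are interior to the corresponding cusp region and are matched by the $\Gamma_u$-equivariance of the Lemma 5.7 decomposition. Finally, projecting by $\pi$: the interiors of compact cells and the quotients $\mathrm{int}\,C_u/\Gamma_u$ embed by Corollary \ref{horocyclic char}, the pieces of $\calt$ tile $S$ because $\widetilde{\mathcal{D}}$ tiles $\mathbb{H}^2$, and the triangles descend to compact and horocyclic ideal triangles in $S$. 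Since $\mathcal{D}$ has finitely many cells (its cells are the $\pi_1 S$-orbit projections), $\calt$ is a genuine finite triangulation of $S$ with vertex set $\{x\}$ containing every Delaunay edge.

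The main obstacle I anticipate is not any single computation but the bookkeeping of equivariance and gluing for the non-compact cells: one must make sure the horocyclic-polygon triangulation chosen in each parabolic orbit is exactly the $\Gamma_u$-invariant one coming from \cite[Lemma 5.7]{DeB_Voronoi}, so that the added infinite edges are consistent around the cusp and the resulting cells really are horocyclic ideal triangles (two finite vertices on a horocycle, one ideal vertex at its ideal point) rather than some other degenerate triangle. Once Corollary \ref{horocyclic char} is invoked in the right form this is essentially bookkeeping, but it is where care is needed; the compact case is routine fan triangulation.
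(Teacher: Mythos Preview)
Your proposal is correct and follows essentially the same approach as the paper's proof: fan-triangulate each compact Delaunay two-cell from a chosen vertex, and subdivide each horocyclic two-cell into horocyclic ideal triangles by joining each vertex of the $\Gamma_u$-fundamental domain of Corollary~\ref{horocyclic char} to the ideal point of its circumscribed horocycle. The paper's version is terser, leaving the equivariance and gluing bookkeeping implicit, but the construction is identical.
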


\begin{proof}$\calt$ is compatible with the Delaunay tessellation if its faces are obtained by subdividing Delaunay two-cells into triangles.  This can be done for instance by dividing each compact two-cell by diagonals from a fixed vertex, and each horocyclic two-cell into horocyclic ideal triangles.  On a horocyclic cell $C_u$,  the latter operation joins each vertex of the fundamental domain for $\Gamma_u$ of Corollary \ref{horocyclic char} to the ideal point of its circumscribed horocycle.\end{proof}

\section{Increasing injectivity radius}\label{the proof, man}

The goal of this section is to prove the main Theorem \ref{only max}.  We will do this in two steps.  The first, Proposition \ref{longer than} below, reduces to the case that all compact Delaunay edges have equal length.  We then prove the Theorem by addressing the case that all Delaunay edge lengths are equal but there is a complicated Delaunay cell $C$.

\begin{proposition}\label{longer than}  For a complete, oriented hyperbolic surface $S$ of finite area and $x\in S$ such that $\maxi(S)=\mathit{injrad}_x(S)$, if the Delaunay tessellation of $(S,x)$ has an edge of length greater than $2\mathit{injrad}_x(S)$ then $S$ is not a local maximum of $\maxi$ on $\mathfrak{T}_{g,n}$.  

In fact, there is a continuous map $t\mapsto S_t\in\mathfrak{T}_{g,n}$ on $(-\epsilon,\epsilon)$ for some $\epsilon>0$, and $x_t\in S_t$ for each $t$, such that $S_0 = S$, $x_0 = x$, and $\frac{d}{dt}\mathit{injrad}_{x_t}(S_t) = \frac{1}{2}$.\end{proposition}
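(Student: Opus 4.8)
The plan is to exhibit the deformation directly inside a deformation space $\mathfrak{D}(S,\calt)$ for a suitable triangulation $\calt$, invoking Propositions \ref{parametrize} and \ref{Delaunay control} to translate statements about edge lengths into statements about $\maxi$. First I would choose $\calt$ to be a triangulation compatible with the Delaunay tessellation of $(S,x)$, as furnished by Lemma \ref{compatible}; then every Delaunay edge — in particular every shortest arc based at $x$, by Lemma \ref{injrad edge} — is an edge of $\calt$. Let $e_{j_1},\dots,e_{j_n}$ be the edges of length exactly $2\mathit{injrad}_x(S)$, and let $\bd^0\in\mathfrak{D}(S,\calt)$ be the edge-length vector of $\calt$ itself. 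By hypothesis there is at least one Delaunay edge of length strictly greater than $2\mathit{injrad}_x(S)$, so the shortest edges do not constitute all of $\calt$.

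The heart of the argument is to produce a smooth path $\bd(t)$ in $\mathfrak{D}(S,\calt)$ with $\bd(0)=\bd^0$ along which each of the distinguished coordinates $d_{j_i}$ increases at unit rate, i.e. $\frac{d}{dt}d_{j_i}(0)=1$ for $i=1,\dots,n$, while all other coordinates are free to move as needed. Since $\mathfrak{D}(S,\calt)$ is cut out of the open set $U\subset(0,\infty)^l$ by the equations $A_x(\bd)=2\pi$ for the single vertex $x$ (there is only one vertex), this is an implicit-function-theorem problem: I must show that the linear map sending a tangent vector $\dot\bd$ to $\sum_i \frac{\partial A_x}{\partial d_i}\dot d_i$ is surjective onto $\mathbb{R}$ — equivalently that $\nabla A_x(\bd^0)\neq 0$ — and moreover that one can prescribe $\dot d_{j_i}=1$ while still landing in $\ker(dA_x)$. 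Surjectivity of $dA_x$ is immediate once some partial $\partial A_x/\partial d_i$ is nonzero: each summand of $A_x$ is a strictly monotone function of the relevant edge length (the horocyclic term $\sin^{-1}(1/\cosh(d/2))$ is strictly decreasing in $d$; the triangle-angle term, by the law of cosines, is strictly increasing in the opposite side and strictly decreasing in an adjacent side), so $A_x$ genuinely depends on its variables and $\nabla A_x\ne 0$. To realize $\dot d_{j_i}=1$ for all $i$ inside $\ker dA_x$, I would: set $\dot d_{j_i}=1$, compute the resulting value $c=\sum_i (\partial A_x/\partial d_{j_i})$, pick any other edge $e_m$ (one of the long ones, which exists by hypothesis) with $\partial A_x/\partial d_m\neq 0$, and set $\dot d_m = -c/(\partial A_x/\partial d_m)$, leaving all remaining $\dot d_i=0$; this vector lies in $\ker dA_x(\bd^0)$ and in the tangent cone to $U$ at $\bd^0$ (since $U$ is open and the chosen direction is a genuine tangent vector), so it integrates to a path in $\mathfrak{D}(S,\calt)$ for small $|t|$.

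Given such a path, Proposition \ref{parametrize} shows $t\mapsto S(\bd(t))$ is a continuous map $(-\epsilon,\epsilon)\to\mathfrak{T}_{g,n}$; set $S_t=S(\bd(t))$ and let $x_t$ be the (unique) vertex of $\calt(\bd(t))$. Since at $t=0$ the shortest arcs based at $x$ are exactly the edges $e_{j_1},\dots,e_{j_n}$, Proposition \ref{Delaunay control} applies on a neighborhood $V$ of $\bd^0$: shrinking $\epsilon$ so that $\bd(t)\in V$, we get $\mathit{injrad}_{x_t}(S_t)=\tfrac12\min_i d_{j_i}(t)$. But $d_{j_i}(t)=2\mathit{injrad}_x(S)+t+o(t)$ for every $i$, so for small $|t|$ the minimum is $2\mathit{injrad}_x(S)+t+o(t)$, whence $\mathit{injrad}_{x_t}(S_t)=\mathit{injrad}_x(S)+\tfrac12 t+o(t)$ and $\frac{d}{dt}\mathit{injrad}_{x_t}(S_t)\big|_{t=0}=\tfrac12$. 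In particular $\maxi(S_t)\geq \mathit{injrad}_{x_t}(S_t)>\mathit{injrad}_x(S)=\maxi(S)$ for small $t>0$, so $S$ is not a local maximum of $\maxi$.

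The main obstacle is the transversality/parametrization bookkeeping in the second paragraph: one must be careful that the constructed tangent direction genuinely has $\dot d_{j_i}=1$ for \emph{all} $i$ simultaneously (not just one), that the single equation $A_x=2\pi$ truly has surjective differential at $\bd^0$ — which is where the strict monotonicity of the angle formulas from (\ref{angles}) is used — and that the chosen auxiliary edge $e_m$ can be taken disjoint from the $e_{j_i}$, which is exactly guaranteed by the hypothesis that some Delaunay edge is longer than $2\mathit{injrad}_x(S)$. A minor subtlety is that increasing the $d_{j_i}$ may eventually create \emph{new} shortest arcs, but this only happens past first order and is harmless for computing the derivative at $t=0$; shrinking $\epsilon$ handles it via Proposition \ref{Delaunay control}.
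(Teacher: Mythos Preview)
Your overall strategy is exactly the paper's: choose a triangulation $\calt$ compatible with the Delaunay tessellation, increase each short edge length at unit rate, and let one ``long'' edge absorb the change so that $A_x\equiv 2\pi$; then invoke Propositions~\ref{parametrize} and~\ref{Delaunay control}. The paper phrases the compensation step as an ODE in $d_n(t)$ rather than an implicit function theorem statement, but these are equivalent.

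There is, however, a genuine gap in your second paragraph. You need some long edge $e_m$ with $\partial A_x/\partial d_m\neq 0$, and your justification---that individual angle summands are strictly monotone in the relevant edge lengths---does not establish this. Since $x$ is the only vertex, $A_x(\bd)$ is the sum of \emph{all} interior angles of \emph{all} triangles, hence equals a constant minus the total area; so $\partial A_x/\partial d_m = -\frac{\partial}{\partial d_m}\bigl(D_0(T_{i_m^+})+D_0(T_{i_m^-})\bigr)$, where $T_{i_m^\pm}$ are the two triangles sharing $e_m$. Each summand here can be positive or negative (depending on whether the triangle is centered and whether $e_m$ is its longest side), and they can cancel. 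Indeed, when $e_m$ is a diagonal of a single Delaunay two-cell, both triangles are inscribed in the same circumcircle and the two terms cancel exactly---this vanishing is precisely what drives the argument in Claim~\ref{increasing} later in the paper. So ``$\nabla A_x\neq 0$'' does not follow from monotonicity of the pieces, and even granting it, nothing you wrote rules out the possibility that every \emph{long} edge has zero partial.

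The paper fills this gap by choosing $e_m=\gamma_n$ to be specifically a long \emph{Delaunay} edge (which exists by hypothesis) and then doing real work: a case analysis on whether the adjacent Delaunay cells $C_{i_n^\pm}$ are compact or horocyclic, centered or non-centered, using Voronoi--Delaunay duality (Lemmas~2.3 and~2.5 of \cite{DeB_Voronoi}) together with the explicit derivative formulas of \cite[Propositions~2.3 and~3.7]{DeB_cyclic_geom}. The upshot is formulas~(\ref{centered non-centered}) and~(\ref{horocyclic non-centered}), each of which is shown to be strictly positive. This is the substantive part of the proof, and your proposal does not supply it.
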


\begin{proof} Let $\calt$ be a triangulation compatible with the Delaunay tessellation of $(S,x)$.  Enumerate the edges of $\calt$ as $\gamma_1,\hdots,\gamma_l$ so that the Delaunay edges consist of those with $j\leq n$ for some $n\leq l$, and $\gamma_j$ has length $2\mathit{injrad}_x(S)$ if and only if $j\leq m$ for some $m<n$, and let $\bd = (d_0,\hdots d_l)$ be the collection of edge lengths.  We will produce a smooth map $t\mapsto \bd(t) = (d_1(t),\hdots,d_n(t))\in\mathfrak{D}_D(S,\calt)$ on some interval $(-\epsilon,\epsilon)$, with $\bd(0) = \bd$, by prescribing the $d_j(t)$ as follows: take $d_j(t) = d_j+t$ for all $j\leq m$ and leave all other edge lengths $d_j(t)$ constant except for $d_n(t)$, which is determined by the ODE $\frac{d}{dt} A_x(\bd(t)) = 0$.

Here $A_x$ is from Definition \ref{triangle def}.  If there is a smooth solution $d_n(t)$ then $A_x(\bd(t)) \equiv 2\pi$ since $\bd$ is the edge length collection of the triangulated hyperbolic surface $(S,\calt)$.  It will then follow from Proposition \ref{parametrize} that $S_t \doteq S(\bd(t))$ is a deformation of $S$ in $\mathfrak{T}_{g,n}$, and from Proposition \ref{Delaunay control} that $\frac{d}{dt} \mathit{injrad}_{x_t}S_t = \frac{1}{2}$, where $x_t$ is the vertex of $S(\bd(t))$.  To show that $\frac{d}{dt} A_x(\bd(t)) = 0$ has a smooth solution we rearrange it using the chain rule and our stipulations on the $d_j(t)$, yielding:\begin{align*}
  0 = & \sum_{j=1}^m \frac{\partial}{\partial d_j} \left(D_0(T_{i_j^+}(\bd(t))) + D_0(T_{i_j^-}(\bd(t))) \right) \\ &\qquad + d_{n}'(t)\cdot\frac{\partial}{\partial d_{n}} \left(D_0(T_{i_{n}^+}(\bd(t))) + D_0(T_{i_{n}^-}(\bd(t)))\right).\end{align*}
Here for each $j$, $T_{i_j^+}$ and $T_{i_j^-}$ are the triangles containing the edge $\gamma_j$; for a triangle $T_i$ with edges $\gamma_{j_1}$, $\gamma_{j_2}$, $\gamma_{j_3}$ we refer by $T_i(\bd(t))$ to the triple $(d_{j_1}(t),d_{j_2}(t),d_{j_3}(t))$ of changing edge lengths; and $D_0(a,b,c)$ records the area of the triangle with edge lengths $a$, $b$ and $c$.  In \cite{DeB_cyclic_geom} we gave formulas for the partial derivatives of $D_0$ with respect to $a$, $b$ and $c$.

If the coefficient $\frac{\partial}{\partial d_{n}} \left(D_0(T_{i_{n}^+}(\bd(t))) + D_0(T_{i_{n}^-}(\bd(t)))\right)$ is non-zero then we can solve for $d_{n}'(t)$, yielding a first-order ODE in $d_{n}(t)$.  We claim this holds at $t=0$, ie for the $T_{i_{n}^{\pm}}(\bd)$, and therefore at all possible values of $\bd(t)$ near $\bd$.  Given the claim, Picard's theorem on the existence of solutions to first-order ODE implies there is a smooth solution $d_{n}(t)$ for small $t$ (note that smoothness of $D_0$ is proven in \cite{DeB_cyclic_geom}).  We will apply results from Section 2 of \cite{DeB_Voronoi}, together with \cite[Proposition 2.3]{DeB_cyclic_geom}, to prove the claim.  

There are two cases, divided by the qualitative nature of the Delaunay cells $C_{i_n}^{\pm}$ of $(S,x)$ containing the triangles $T_{i_n}^{\pm}$.  In the first case one of the $C_{i_{n}^{\pm}}$, say $C_{i_n^-}$, is compact and therefore cyclic but not centered, and $\gamma_n$ is its longest side.  Here a cyclic polygon is \textit{centered} if its interior contains the center of its circumcircle.  
The longest side of a non-centered cyclic polygon separates its interior from the center of its circumcircle \cite[Prop.~2.2]{DeB_cyclic_geom}, so since $T_{i_n}^-$ is contained in $C_{i_n}^-$ it is also non-centered with longest side $\gamma_n$.

In this case Lemma 2.5 of \cite{DeB_Voronoi} asserts that the dual Voronoi vertex $v$ to $C_{i_{n}^-}$ is the initial vertex of a non-centered Voronoi edge $e$ geometrically dual to $\gamma_n$.  If $e$ is compact then its terminal vertex is the geometric dual to $C_{i_{n}^+}$, so by the same result it is not also the case that $C_{i_{n}^+}$ is non-centered with longest edge $\gamma_{j_0}$.  This is therefore also not the case for $T_{i_n^-}$, so be \cite[Proposition 2.3]{DeB_cyclic_geom} the coefficient of $d_{n}'(t)$ at $t=0$ is:
\begin{align}\label{centered non-centered} 
\sqrt{\frac{1}{\cosh^2(d_{n}/2)} - \frac{1}{\cosh^2 J(T_{i_{n}^+}(\bd))}} - \sqrt{\frac{1}{\cosh^2(d_{n}/2)} - \frac{1}{\cosh^2 J(T_{i_{n}^-}(\bd))}} \end{align}
Above, $J(T_{i_{n}^+}(\bd))$ is the circumcircle radius of $C_{i_{n}^+}$, and therefore also of $T_{i_n^+}$, and likewise for $J(T_{i_{n}^-}(\bd))$.  Lemma 2.3 of \cite{DeB_Voronoi} implies that the former is greater than the latter, and it follows in this sub-case that the coefficient of $d_{n}'(t)$ is greater than $0$ at $t=0$.

It is also possible in this case that the Voronoi edge $e$ geometrically dual to $\gamma_{n}$ is noncompact.  Then arguing as in the proof of \cite[Lemma 5.8]{DeB_Voronoi} establishes that  $C_{i_{n}^+}$ is also non-compact: for the universal cover $\pi\co\mathbb{H}^2\to S$, if $\cals = \pi^{-1}(x)\subset \mathbb{H}^2$ and $\tilde{e}$ is a lift of $e$, Lemma 1.9 of \cite{DeB_Voronoi} asserts that its ideal endpoint $v_{\infty}$ is the ideal point of a horocycle $S$ with the property that the horoball $B$ bounded by $S$ satisfies $B\cap\cals = S\cap\cals$, and $S$ contains the endpoints of the geometric dual $\gamma$ to $\tilde{e}$.  By the empty circumcircles condition, the convex hull of $B\cap\cals$ is a non-compact Delaunay two-cell $\tilde{C}$ containing $\gamma$, which is a lift of $\gamma_{n}$ since $\tilde{e}$ is a lift of $e$.  Hence $\tilde{C}$ projects to a non-compact two-cell containing $\gamma_{n}$, necessarily $C_{i_{n}^+}$.

Corollary \ref{horocyclic char} implies that on the interior of $\tilde{C}$ the projection to $C_{i_{n}^+}$ factors through an embedding of $\mathit{int}(\tilde{C})/\Gamma$, where $\Gamma$ is the stabilizer of $v_{\infty}$ in $\pi_1 S$.  We may assume that the triangulation of $C_{i_n^+}$ has been obtained by dividing $\tilde{C}$ into triangles with geodesic rays joining its vertices to $v_{\infty}$, then projecting, so in particular $T_{i_n^+}$ is the projection of a horocyclic ideal triangle with compact side of length $d_n$.  From the second equation of Proposition 3.7 of \cite{DeB_cyclic_geom} we therefore obtain:\begin{align}\label{horocyclic non-centered}
  \frac{\partial}{\partial d_{n}} \left(D_0(T_{i_{n}^+}(\bd)) + D_0(T_{i_{n}^-}(\bd))\right) =  \frac{1}{\cosh(d_{n}/2)} - \sqrt{\frac{1}{\cosh^2(d_{n}/2)} - \frac{1}{\cosh^2 J(T_{i_{n}^-}(\bd))}} \end{align}
Again this is positive, and the claim follows in this case.

The second case of the claim is when neither of $C_{i_{n}^{\pm}}$ is non-centered with longest edge $\gamma_{n}$, whence the same holds for the $T_{i_n^{\pm}}$.  In this case both terms of the coefficient of $d_{n}'(t)$ are positive, by Propositions 2.3 or 3.7 of \cite{DeB_cyclic_geom}, applied as above.
\end{proof}

\begin{proof}[Proof of Theorem \ref{only max}]  Let $(S,x)$ be a pointed surface whose Delaunay tessellation is not of the form described in the Theorem.  The goal is to show that there is a deformation of $(S,x)$ that increases injectivity radius at $x$.  We will assume we are in the case not covered by Proposition \ref{longer than}: all Delaunay edges of $(S,x)$ have length $2\mathit{injrad}_x(S)$, and there is a Delaunay two-cell $C$ which is compact and not a triangle, or non-compact and not a monogon.

Let $\calt$ be a triangulation that is compatible with the Delaunay tessellation of $(S,x)$, with an edge $\gamma_0$ that is a diagonal of $C$ with a compact triangle $T_1$ of $\calt$ on one side and the remainder of $C$ on the other.  If $C$ is non-compact we accomplish this as follows: for a locally isometric universal cover $\pi\co\mathbb{H}^2\to S$ and a horocyclic two-cell $\widetilde{C}$ of the Delaunay tessellation of $\pi^{-1}(x)$ projecting to $C$, let $\tilde{\gamma}_0$ join vertices of $\widetilde{C}$ separated by exactly one other vertex on the horocycle in which it is inscribed.  Then the compact subregion $\widetilde{T}_1$ of $\widetilde{C}$ that it bounds is a triangle, hence so is its projection $T_1$.  In this non-compact case we divide the remainder of $C$, and all other horocyclic Delaunay cells, into horocyclic ideal triangles as previously.

Enumerate the edges of $\calt$ as $\gamma_0,\hdots,\gamma_l$ so that the Delaunay edges are $\gamma_1,\hdots,\gamma_n$ for some $n\leq l$.  Let $d_j$ be the length of $\gamma_j$ for each $j$, and note that by hypothesis $\gamma_j$ has length $d\doteq2\mathit{injrad}_x(S)$ for $0<j\leq n$.  Now let $\bd = (d_0,\hdots,d_l)$, and prescribe $\bd(t) = (d_0(t),\hdots,d_l(t))$ with $\bd(0) = \bd$ as follows: $d_0(t) = d_0-t$; $d_j(t) \equiv d_j$ for $j>n$; and for $0<j\leq n$, $d_j(t) = d(t)$ is determined by the differential equation $\frac{d}{dt}A_x(\bd(t)) = 0$.

Here as in the proof of Proposition \ref{longer than}, $A_x$ is the angle sum function from Definition \ref{triangle def}, and for a smooth solution $\bd(t)$ we have $A_x(\bd(t))\equiv 2\pi$ since $\bd$ is the edge length collection of the triangulated hyperbolic surface $(S,\calt)$.  It will then follow from Proposition \ref{parametrize} that $S(\bd(t))$ is a deformation of $S$ in $\mathfrak{T}_{g,n}$.  And if $d(t)$ increases with $t$, then by Proposition \ref{Delaunay control}, the injectivity radius of $S(\bd(t))$ at its vertex will as well.  We will show this below.

As in the proof of Proposition \ref{longer than} we rewrite the equation $\frac{d}{dt}A_x(\bd(t)) = 0$ using the chain rule and our choices for $\bd(t)$:\begin{align}\label{eggman}
  d'(t)\sum_{j=1}^{n} \frac{\partial}{\partial d_j}\left(D_0(T_{i_j^+}(\bd(t)))+D_0(T_{i_j^-}(\bd(t))) \right) - \frac{\partial}{\partial d_0}\left(D_0(T_0(\bd(t)))+D_0(T_1(\bd(t)))\right) =0 \end{align}
Again as in Proposition \ref{longer than}, for each $j>0$ the $T_{i_j^{\pm}}$ are the triangles containing the edge $\gamma_j$.  Here $T_0$ and $T_1$ are the triangles containing $\gamma_0$, and by construction, $T_1$ is compact.  In all cases if $T_{i_j^{\pm}}$ has edges $\gamma_{j_1}$, $\gamma_{j_2}$, $\gamma_{j_3}$ then $T_{i_j^{\pm}}(\bd(t))$ refers to the collection $(d_{j_1}(t),d_{j_2}(t),d_{j_3}(t))$ of changing edge lengths.  We claim first that all coefficients above are smooth, and that the coefficient of $d'(t)$ is positive.

To the latter point, recall that since $\calt$ is compatible with the Delaunay tessellation of $(S,x)$, each $T_{i_j^{\pm}}$ is contained in a Delaunay cell $C_{i_j^{\pm}}$.  If $C_{i_j^{\pm}}$ is compact it is centered, being equilateral, so since $T_{i_j^{\pm}}$ has the same circumcircle it is either centered or one of its edges is a diagonal that separates it from the circumcircle center.  In neither of these cases is it non-centered with longest edge $\gamma_j$, so by Proposition 2.3 of \cite{DeB_cyclic_geom} its contribution to the coefficient of $d'(t)$ is positive.  If the Delaunay cell $C_{i_j^{\pm}}$ containing $T_{i_j^{\pm}}$ is horocyclic, and hence $T_{i_j^{\pm}}$ is a horocyclic ideal triangle by construction, then this follows from Proposition 3.7 of \cite{DeB_cyclic_geom}.

Smoothness of the coefficients of (\ref{eggman}) follows from results of \cite{DeB_cyclic_geom}.  In particular, Proposition 2.3 there asserts that $D_0$ is smooth on the set $\calAC_3\subset(0,\infty)^3$ parametrizing cyclic triangles.  This applies to each $T_{i_j^{\pm}}$ contained in a compact Delaunay cell.  Each one contained in a horocyclic cell, except possibly $T_1$, is a horocyclic ideal triangle by construction, and smoothness follows by \cite[Prop.~3.7]{DeB_cyclic_geom}.  If $T_1$ is in a horocyclic Delaunay cell then its side-length collection $T_1(\bd)$ lies in the set $\calHC_3$ of \cite[Corollary 3.5]{DeB_cyclic_geom}, parametrizing compact ``horocyclic'' triangles.  $\calHC_3$ has codimension one in $(0,\infty)^3$.  It bounds the set $\calc_3$ parametrizing cyclic triangles on one side, and the set $\cale_3$ parametrizing ``equidistant'' triangles on the other (see \cite[\S 4]{DeB_cyclic_geom}; in particular Cor.~4.6 there).  

For arbitrary $n\geq 3$, the versions of $D_0$ that record areas of horocyclic and equidistant $n$-gons are respectively defined in Propositions 3.7 and 4.9 of \cite{DeB_cyclic_geom}.  We proved there that the various definitions of $D_0$ determine a continuous function on $\calAC_n\cup\calHC_n\cup\cale_n$, but we did not address smoothness on $\calHC_n$.  However since $D_0$ measures area, for $n=3$ it agrees everywhere with the smooth function $A$ of \cite[Lemma 1.16]{DeB_cyclic_geom}.  Therefore since $T_1$ is a triangle, the coefficient function $D_0(T_1(\bd(t)))$ of (\ref{eggman}) is smooth.

Since the coefficient of $d'(t)$ in (\ref{eggman}) is positive at $\bd$ and all coefficients are smooth, there is a smooth solution $d(t)$ near $t=0$. The sign of $d'(t)$ is determined by the sign of $\frac{\partial}{\partial d_0}\left(D_0(T_0(\bd(t))+D_0(T_1(\bd(t)))\right)$.

\begin{claim}\label{increasing} For small $t>0$, $\frac{\partial}{\partial d_0}\left(D_0(T_0(\bd(t))+D_0(T_1(\bd(t)))\right)>0$, hence $d'(t)>0$.\end{claim}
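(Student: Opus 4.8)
By (\ref{eggman}) and the positivity of the coefficient of $d'(t)$ just established, $d'(t)$ has the same sign as $f(t)\doteq\frac{\partial}{\partial d_0}\big(D_0(T_0(\bd(t)))+D_0(T_1(\bd(t)))\big)$, so it suffices to prove $f(t)>0$ for small $t>0$. Note that $D_0(T_0(\bd(t)))+D_0(T_1(\bd(t)))$ is the area of the hyperbolic quadrilateral $Q(t)$ obtained by gluing $T_0(\bd(t))$ to $T_1(\bd(t))$ along $\gamma_0$ (with an ideal vertex if $C$ is non-compact), so $f(t)$ records how this area changes as one varies the length of $\gamma_0$ with the other edge lengths of $T_0$ and $T_1$ held fixed. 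If $C$ is non-compact the claim is immediate: by construction $T_0$ is a horocyclic ideal triangle with compact side $\gamma_0$, so $\partial D_0(T_0(\bd(t)))/\partial d_0 = 1/\cosh(d_0(t)/2)$ by \cite[Prop.~3.7]{DeB_cyclic_geom}, while $T_1$ is a compact cyclic triangle, so \cite[Prop.~2.3]{DeB_cyclic_geom} gives $\partial D_0(T_1(\bd(t)))/\partial d_0 = \pm\sqrt{\cosh^{-2}(d_0(t)/2)-\cosh^{-2}J(T_1(\bd(t)))}$, of absolute value strictly less than $1/\cosh(d_0(t)/2)$ since any chord of a circle is at most a diameter; hence $f(t)>0$ for all $t$ near $0$.

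Suppose then that $C$ is compact. Since all Delaunay edges have length $d$ and $C$ is inscribed in a circle, $C$ is an equilateral cyclic polygon, hence a regular $m$-gon with $m\geq 4$ (it is not a triangle) and some circumradius $J_C$, and $T_0(\bd),T_1(\bd)\subseteq C$ are inscribed in its circumcircle. Here $T_1(\bd)$ has side lengths $d,d,d_0$ with $d_0$ its unique longest side, and it is non-centered when $m\geq 5$ because its circumcenter, which is that of $C$, lies outside the ``corner'' triangle $T_1$; also $\gamma_0$ is not the longest side of $T_0(\bd)$ (the next diagonal of $C$ is longer), and $d_0 = 2J_C$ when $m=4$. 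So by \cite[Prop.~2.2]{DeB_cyclic_geom} and the sign convention of \cite[Prop.~2.3]{DeB_cyclic_geom} displayed in (\ref{centered non-centered}),
\[ \frac{\partial D_0(T_1(\bd))}{\partial d_0}=-\sqrt{\frac{1}{\cosh^2(d_0/2)}-\frac{1}{\cosh^2 J_C}},\qquad \frac{\partial D_0(T_0(\bd))}{\partial d_0}=+\sqrt{\frac{1}{\cosh^2(d_0/2)}-\frac{1}{\cosh^2 J_C}}, \]
both vanishing if $m=4$; hence $f(0)=0$, and so $d'(0)=0$ by (\ref{eggman}).

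To conclude I bootstrap from $f(0)=0$. Since $d'(0)=0$ and each side length of $Q(t)$ is either constant in $t$ or equal to $d(t)$, the side-length vector of $Q(t)$ agrees with that of $Q(0)$ to order $t^2$. Let $\Phi(q)$ be the area of a hyperbolic quadrilateral with the four side lengths of $Q(0)$, as a function of its diagonal $q$, and let $\Psi(q,e)$ denote $\frac{\partial}{\partial d_0}\big(D_0(T_0)+D_0(T_1)\big)$ evaluated with $\gamma_0$ of length $q$, the Delaunay edges of length $e$, and all other edge lengths held at their values in $\bd$; then $\Psi(q,d)=\Phi'(q)$ and $f(t)=\Psi(d_0-t,d(t))$. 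Since $Q(0)\subseteq C$ is inscribed in a circle with diagonal $d_0$, $\Phi'(d_0)=f(0)=0$, and the chain rule together with $d'(0)=0$ gives $f'(0)=-\Phi''(d_0)$, so $f(t)=-\Phi''(d_0)\,t+O(t^2)$. The claim therefore reduces to showing $\Phi''(d_0)<0$, i.e.~that the cyclic configuration is a strict local maximum of area among hyperbolic quadrilaterals with the prescribed side lengths.

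This last point is the crux, and I expect it to follow from the ``calculus of cyclic polygons'' of \cite{DeB_cyclic_geom}: the cyclic configuration strictly maximizes area for fixed side lengths, with nondegenerate second-order behaviour. A concrete route is via the monotonicity of circumradius in side length proved there, which forces the circumradii of the two triangles comprising $Q(t)$ to separate at first order as $q$ moves away from $d_0$, so that the two square roots in the display above cease to cancel with the correct sign. Making this precise --- in particular in the small cases $m=4,5,6$, where the separation of the circumradii is not transparent from convexity considerations alone --- is the step I expect to demand the most care.
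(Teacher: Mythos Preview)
Your non-compact case has a real gap. At $t=0$ the compact triangle $T_1$ is \emph{horocyclic}, not cyclic: its three vertices lie on the horocycle circumscribing $C$, so $T_1(\bd)\in\calHC_3$, the circumradius is infinite, and the square root in your display equals $1/\cosh(d_0/2)$ exactly. Thus $f(0)=0$, contradicting your claim that $f(t)>0$ for all $t$ near $0$. To recover the argument for $t>0$ you must show that $T_1(\bd(t))$ moves into $\calAC_3$ (finite circumradius) rather than into $\cale_3$; this depends on the direction $\frac{d}{dt}T_1(\bd(0))=(-1,d'(0),d'(0))$, so you must first establish $d'(0)=0$ from $f(0)=0$ and then check that $(-1,0,0)$ points into $\calAC_3$ at $T_1(\bd)$. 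The paper does exactly this, invoking the local graph description of $\calHC_3$ from \cite[Cor.~3.5]{DeB_cyclic_geom}.

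In the compact case your reduction to $\Phi''(d_0)<0$ is correct, and you honestly flag that this is where the work lies---but then you do not do it. Your ``concrete route'' via separation of the two circumradii is in fact the paper's argument, and it can be made precise without any second-derivative analysis: since $d'(0)=0$, the chain rule gives $\frac{d}{dt}J(T_i(\bd(t)))\big|_{t=0}=-\frac{\partial J}{\partial d_0}(T_i(\bd))$, and Proposition~1.14 of \cite{DeB_cyclic_geom} yields $\frac{\partial J}{\partial d_0}(T_1(\bd))>1/2$ (non-centered, longest side $\gamma_0$) while $\frac{\partial J}{\partial d_0}(T_0(\bd))<1/2$ (centered, or $\gamma_0$ not longest). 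Hence $J(T_0(\bd(t)))>J(T_1(\bd(t)))$ for small $t>0$, and formula~(\ref{centered non-centered}) gives $f(t)>0$. The quadrilateral case $m=4$ is genuinely special---both triangles are semicyclic at $t=0$---and the paper handles it separately by showing $(-1,0,0)$ points into $\calc_3$ at both $T_i(\bd)$, after which both summands of $f(t)$ are individually positive. Your framework does not single out this case, and a uniform second-derivative bound covering it would need its own justification.
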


\begin{proof}[Proof of claim] First suppose $C$ is non-compact.  Then $d'(0)=0$, since at time $0$ we have:
$$ \frac{\partial}{\partial d_0} D_0(T_0(\bd)) = \frac{1}{\cosh(d_0/2)} = -\frac{\partial}{\partial d_0} D_0(T_1(\bd)); $$
The computation here for $T_0$ is obtained by taking a derivative with respect to $d_0$ of the second formula of \cite[Proposition 3.7]{DeB_cyclic_geom}.  For $T_1$ it follows similarly from the first formula there, noting that by construction $d_0$ is the largest side length of $T_1$.  

This requires some comment since the formula in question applies only to points of $\calHC_n$, which as we pointed out above is codimension-one in $(0,\infty)^n$.  But since we have chosen $\gamma_0$ so that $n=3$, as pointed out above $D_0$ is smooth on a neighborhood of the side length collection $T_1(\bd) = (d_0,d,d)$ of $T_1$, and its partial derivative with respect to $d_0$ at this point is a limit of $\frac{\partial D_0}{\partial d_0}(\bd_n)$ for a sequence $\{\bd_n\}\in\calAC_3$ approaching $T_1(\bd)$.  Noting that all but finitely many $\bd_n$ are in $\calAC_3-\calc_3$, by Corollary 3.5 of \cite{DeB_cyclic_geom}, and the circumcircle radius $J(\bd_n)\to\infty$ as $n\to\infty$, by Proposition 3.6 there, the given formula is a limit of the one given by Proposition 2.3 there.

By the above we have that $\frac{d}{dt}T_1(\bd(0)) = (-1,0,0)$.  Near $(d_0,d,d)$, $\calHC_3$ is characterized as a graph $\{h_0(x,y),x,y)\}$ by Corollary 3.5 of \cite{DeB_cyclic_geom}, for $h_0$ as defined there, and $\calAC_3$ is characterized as $\{(x,y,z)\,|\, x<h_0(y,z)\}$; compare with \cite[Corollary 1.10]{DeB_cyclic_geom}.  Thus this vector points into $\calAC_3$, so $T_1(\bd(t))\in\calAC_3$ for all small-enough $t>0$.  For all such $t$ it follows that $\frac{\partial}{\partial d_0}\left(D_0(T_0(\bd(t))+D_0(T_1(\bd(t)))\right)$ is given by the formula of (\ref{horocyclic non-centered}), with $d_0$ replacing $d_{n}$ and $T_0(\bd(t))$ replacing $T_{i_{n}^-}(\bd_0)$ there.  This quantity is positive, therefore so is $d'(t)$, and the claim holds if $C$ is non-compact.

We now address the case that $C$ is compact.  First suppose that $C$ is a quadrilateral.  By hypothesis all its edge lengths are equal to $d=2\mathit{injrad}_x(S)$, so since it is cyclic and therefore uniquely determined by its edge length collection it is fully symmetric.  In particular, each diagonal of $C$ is a diameter of its circumcircle, so $d_0 = 2J(d_0,d,d)$, where $J\co\calAC_n\to(0,\infty)$ records circumcircle radius of cyclic polygons; see Proposition 1.14 of \cite{DeB_cyclic_geom}.  Plugging this into Proposition 2.3 there gives $d'(0) = 0$ again.

Again in this case we have $\frac{d}{dt}T_1(\bd) = (-1,0,0) = \frac{d}{dt}T_0(\bd)$.  In this case the edge length collections of $T_0$ and $T_1$ lie in the set $\calBC_3$ parametrizing \textit{semicyclic} triangles, cyclic triangles with one side a diameter of their circumcircles.  This is a codimension-one submanifold of $(0,\infty)^3$ which is the frontier of $\calc_3$, the open set parametrizing centered triangles, in $\calAC_3$; see \cite[Proposition 1.12]{DeB_cyclic_geom}.  The vector $(-1,0,0)$ points into $\calc_3$ at $T_1(\bd)$, since near here $\calBC_3$ is a graph $\{(b_0(x,y),x,y)\}$ (see \cite[Prop.~1.12]{DeB_cyclic_geom}) and $\calc_3 = \{(z,x,y)\,|\, z<b_0(x,y)\}$ (compare \cite[Prop.~1.11]{DeB_cyclic_geom}).  Therefore $T_1(\bd(t)) = T_0(\bd(t))\in\calc_3$ for all small $t>0$, and it follows from Proposition 2.3 of \cite{DeB_cyclic_geom} that $d'(t)>0$ for such $t$.

If $C$ is not a quadrilateral then we may choose $\gamma_0$ and $T_1$ so that the circumcircle center of $C$ lies on the opposite side of $\gamma_0$ from $T_1$.  Then $T_1(\bd)\in\calAC_{3}-(\calc_{3}\cup\calBC_{3})$ has largest entry $d_0$.  On the other hand either $T_0(\bd)\in\calc_{3}$, i.e.~$T_0$ is centered, or $T_0$ is not centered and $\gamma_0$ is not its longest side.  The condition on $T_1(\bd)$, being open, holds for $T_1(\bd(t))$ for all $t$ near $0$.  Similarly, if $T_0(\bd)\in\calc_3$ then this also holds for $T_0(\bd(t))$, or if $d_0$ is not the largest entry of $T_0(\bd)$ then $d_0(t)$ is not the largest entry of $T_0(\bd(t))$, for all $t$ near $0$.  Proposition 2.3 of \cite{DeB_cyclic_geom} thus implies that $\frac{\partial}{\partial d_0}\left(D_0(T_0(\bd(t))+D_0(T_1(\bd(t)))\right)$ is given for all such $t$ by the formula (\ref{centered non-centered}), with $d_0$ replacing $d_{n}$, $T_0(\bd(t))$ replacing $T_{i_{n}^+}(\bd)$, and $T_1(\bd(t))$ replacing $T_{i_{n}^-}(\bd)$.

We have $d'(0) = 0$ by (\ref{centered non-centered}), since $T_0$ and $T_1$ are both inscribed in the circumcircle of $C$.  For $t\ne 0$, if $J(T_0(\bd(t))) > J(T_1(\bd(t)))$ then $d'(t)>0$, again by (\ref{centered non-centered}).  Applying Proposition 1.14 of \cite{DeB_cyclic_geom}, we obtain either
$$0<\frac{\partial}{\partial d_0} J(T_0(\bd)) < 1/2 <  \frac{\partial}{\partial d_0} J(T_1(\bd)), $$
if $T_0$ is centered (i.e. $T_0(\bd)\in\calc_3$), or $\frac{\partial}{\partial d_0} J(T_0(\bd)) < 0$ if not.  Since $d'(0)=0$, the chain rule implies that $\frac{d}{dt} J(T_0(\bd)) = -\frac{\partial}{\partial d_0} J(T_0(\bd))$, and similarly for $\frac{d}{dt} J(T_1(\bd))$.  Thus for $t>0$, $J(T_0(\bd))>J(T_1(\bd))$ so $d'(t)>0$, and the claim is proved in all cases.\end{proof}

Lemma \ref{hyperbolize} now implies that $\bd(t)$ determines a path $\left(S(\bd(t)),\calt(\bd(t))\right)$ of triangulated hyperbolic surfaces, which Proposition \ref{parametrize} implies is continuous in $\mathfrak{T}_{g,n}$.  By Proposition \ref{Delaunay control} and our construction, $S_t$ has injectivity radius $d(t)/2$ at the vertex of $\calt_t$, so since $d$ increases with $t$ the result holds.\end{proof}

\bibliographystyle{plain}
\bibliography{local_max}

\end{document}